\documentclass[12pt,a4paper]{article}
\usepackage{amsfonts}
\usepackage{amsmath}
\usepackage{amsthm}
\usepackage{amssymb}
\usepackage{tikz-cd}
\usepackage{tikz}
\usepackage{diagbox}
\usepackage{subcaption}
\theoremstyle{plain}
\newtheorem{theorem}{Theorem}[section]
\newtheorem{lemma}[theorem]{Lemma}
\newtheorem{definition}[theorem]{Definition}
\newtheorem{corollary}[theorem]{Corollary}
\newtheorem{remark}[theorem]{Remark}
\newtheorem{proposition}[theorem]{Proposition}
\DeclareMathOperator{\ord}{ord}
\author{Tadeusz Krasiński \& Aleksandra Zakrzewska}
\title{Jump of the Milnor Number in Linear Deformations with Fixed Order}
\begin{document}
	\maketitle
	\begin{abstract}
Let $f_{0}$ be a plane curve singularity. We study the jump of the Milnor number
of a homogeneous singularity $f_{0}$ of order $n$ in the class of linear deformations
$f_{s}=f_{0}+sg$ of $f_0$ with fixed order $\ord f_{s}=j<n$ for $s\neq0$,
where $g$ is holomorphic and $\ord g=j.$
	\end{abstract}
\section{Introduction}

A basic numeric invariant (topological) of an isolated singularity $f_{0}$ is
the Milnor number $\mu(f_{0})$ of $f_{0}.$ In a small, one-parameter holomorphic deformation
$(f_{s})$ of $f_{0}$ the Milnor number decreases and is constant for small
$s\neq0$ by the upper semi-continuity property of $\mu(f_{s})$ in families in the Zariski
topology. It is an interesting phenomenon that for some singularities we do
not always get every number $<\mu(f_{0})$ as the Milnor number $\mu(f_{s})$
for $s\neq0$ of a deformation of $f_{0}.$ For instance, for the singularity
$x^{4}+y^{4}$ in the Arnold $X_{9}$ class, with the Milnor number $9$, there
does not exist its deformation $(f_{s})$ for which $\mu(f_{s})=8,$ $s\neq0$
\cite{BK14}. This phenomenon, in particular the smallest non-zero difference
between $\mu(f_{0})$ and $\mu(f_{s})$, $s\neq 0$ over all deformations of $f_{0}$ (called
the \textbf{jump of the Milnor number}) was studied by S. Gusein-Zade \cite{GZ93},
A. Bodin \cite{Bod07}, Sz. Brzostowski, T. Krasi\'{n}ski and J. Walewska
\cite{Wal13,BK14,KW19,BKW21} and A.
Zakrzewska \cite{Zak17,Zak25} in various classes of singularities and
deformations. The problem of computing the jump of the Milnor number is not
easy because it is not a topological invariant of singularities \cite{BKW21}.

In the article, we consider singularities of plane curves $f_0$ and their \textbf{linear
	deformations} $f_{s}=f_{0}+sg,$ where $g$ is holomorphic and $g(0,0)=0.$ For such
deformations of $f_{0}$ the jump is in fact a topological invariant
\cite{doktorat}. A. Zakrzewska in \cite{Zak25} determined the values of such
jumps for quasihomogeneous singularities (in particular homogeneous). We consider a similar problem: for a
homogeneous plane curve singularity of order $n$ (geometrically, this is a system
of $n$ different lines intersecting at the origin of $\mathbb{C}^{2})$ we determine the
jump of the Milnor number for linear deformations of order $j$ or less,
$2\leqslant j<n$ (it means $\ord g=j)$. The case $j=1$ is
trivial as in this case the jump is equal to $(n-1)^{2}$. We prove that the
jump of $f_0$ for fixed $j$ is equal to $(n-j)(n-2)$ i.e., the maximal generic Milnor number of linear
deformations of $f_{0}$ of order $j$ is equal to $(n-2)j+1$ (Theorem \ref{main}). In particular, for the greatest possible $j=n-1$ we get known result \cite{Zak25} that the jump for homogeneous $f_0$ of order $n$ for linear deformations of $f_0$ is equal to $n-2$ (Corollary \ref{wnio}).

To obtain our results we represent singularities with Enriques diagrams
which completely determine topological types of singularities \cite{CA00}. The
main tool in the proofs is the result by M. Alberich-Carrami\~{n}ana and J.
Ro\'{e} \cite{ACR05}, which gives a necessary and sufficient condition for two
Enriques diagrams of singularities to be linearly adjacent, i.e., when a singularity
is a linear deformation of another.

\section{Enriques diagrams}

Let $f_{0}:(\mathbb{C}^{2},0)\rightarrow(\mathbb{C},0)$ be a singularity i.e.,
$f_{0}$ is the germ of a holomorphic function such that $f_{0}\neq0$,
$f_{0}(0,0)=0$, and $f_{0}$ is reduced in $\mathcal{O}^{2}$. It is equivalent
to the condition that the gradient mapping $\nabla f:(\mathbb{C}%
^{2},0)\rightarrow(\mathbb{C}^{2},0)$ has an isolated zero at $0$. The
zero-set germ at $0$ of $f_{0}$ is denoted by $V(f_{0})$. To represent the
topological type of $f_{0}$, we use the Enriques diagrams \cite{CA00}.
Recall the Enriques diagram of $f_{0}$ is a rooted tree whose vertices are all
infinitely near points of $0$ lying on $V(f_{0})$ in the minimal resolution of $f_{0}$.
It has two types of edges reflecting a natural proximity relation between
vertices, coming from the resolution process. Notice that the Enriques diagram of
$f_{0}$ has no weights, but from it we may read off the orders of the strict
transforms of $f_{0}$ at vertices, the orders of the total transforms of $f_{0}$
at vertices, the Milnor number, the $\delta$-invariant of $f_{0}$, and other
invariants. To define linear adjacency of Enriques diagrams (and thus
singularities), we need a modification of Enriques diagrams (more flexible), introduced in the theorem of M. Alberich-Carrami\~{n}ana and J. Ro\'{e}
--
\textbf{abstract weighted Enriques diagrams}.

\begin{definition}
	[\cite{ACR05}] An abstract weighted Enriques diagram is a pair $(D,\nu),$ where $D$ is a rooted tree
	with a binary relation between vertices, called \textbf{proximity}, and 
	$\nu$ is a function $D\rightarrow\mathbb{Z}_{\geq0}$ which satisfy:
	
	1. the root $R$ is proximate to no vertex,
	
	2. every vertex is proximate to its predecessor,
	
	3. no vertex is proximate to more than two vertices,
	
	4. if a vertex $Q$ is proximate to two vertices, then one of them, according to 2., is the
	immediate predecessor of $Q$ and it is proximate to the other,
	
	5. given two vertices $P,Q$ with $Q$ proximate to $P,$ there is at most one
	vertex proximate to both of them.
\end{definition}

We denote this proximity relation $Q\rightarrow P$, $Q$ is proximate to $P$. The vertices which are
proximate to two other points are called \textbf{satellite} vertices, the other
vertices (except the root) are called \textbf{free} vertices. A vertex is
\textbf{final} (or a \textbf{leaf}) if it has no successor. Abstract weighted
Enriques diagrams are drawn according to the following rules (as in the
classical version):

1. if $Q$ is a free successor of $P,$ then the edge going from $P$ to $Q$ is curved,

2. the sequence of edges connecting a maximal succession of vertices proximate
to the same vertex $P$ is shaped into a line segment, transversal to the edge
joining $P$ to the first vertex of the sequence.

Each singularity $f_{0}$ determines an abstract weighted Enriques diagram
$E(f_{0})$, called the Enriques diagram of $f_0$: the tree is as in the classical version and the weights are the
orders of proper transforms of $f_{0}$ at vertices of the tree. Not every abstract
weighted Enriques diagram $(D,\nu)$ represents a singularity. It represents a singularity if and only if, for all $P\in D$,
\begin{align*}
	\nu(P)  &  >0,\\
	\nu(P)  &  =1\text{ for every final and free }P,\\
	\nu(P)  &  =\sum_{Q\rightarrow P}\nu(Q)\text{ for non-final }P.
\end{align*}
Such abstract weighted Enriques diagrams will be called \textbf{complete}.
However, non-trivial ($\nu(R)>0$) abstract weighted Enriques diagrams which satisfy only the inequality%
\begin{equation}
	\nu(P)\geqslant\sum_{Q\rightarrow P}\nu(Q)\text{ } \label{1}%
\end{equation}
for all $P\in D$ also represent singularities. Namely, we may complete any such
diagram to a unique complete one by adding to vertices $P$ for which $\nu
(P)>\sum_{Q\rightarrow P}\nu(Q)$ edges to new free vertices of weights 1 in
the number $\nu(P)-\sum_{Q\rightarrow P}\nu(Q)$ and removing vertices of the
weight $0.$ Diagrams satisfying (\ref{1}) will be called \textbf{consistent}.
In other words, each consistent abstract weighted Enriques diagram represents a
unique singularity. An example of a consistent diagram and its completion is
shown in Figure \ref{com}. In the sequel two consistent abstract weighted
Enriques diagrams representing the same singularity will be considered as
\textbf{equivalent}. It is easy to see (\cite{Zak17},Theorem 2.12).

\begin{figure}	\centering
	\begin{subfigure}{8em}
		\begin{tikzpicture}
			\node[draw=none] (r) at (0.2,0) {$\bullet$};
			\node[draw=none] (r) at (0.4,0.2) {$_7$};					
			\draw[fill=none](0.2,0.018) circle (0.12);						
			\node[draw=none] (p2) at (0,-1) {$\bullet$};
			\node[draw=none] (p2) at (0.2,-0.8) {$_3$};					
			\node[draw=none] (p3) at (0.8,-1) {$\bullet$};
			\node[draw=none] (p3) at (1,-0.8) {$_2$};													
			\node[draw=none] (p6) at (0.2,-1.7) {$\bullet$};
			\node[draw=none] (p6) at (0.4,-1.6) {$_0$};								
			\node[draw=none] (p7) at (0.8,0.4) {$\bullet$};	
			\node[draw=none] (p7) at (1,0.5) {$_1$};									
			\draw[-] (0.2,0) edge[bend right] (0,-1);		
			\draw[-] (0,-1) to (0.8,-1);		
			\draw[-] (0,-1) edge[bend left] (0.2,-1.7);			
			\draw[-] (0.2,0) edge[bend right] (0.8,0.4);										
		\end{tikzpicture}
		\caption{}
	\end{subfigure}
	\begin{subfigure}{8em}
		\begin{tikzpicture}
			\node[draw=none] (r) at (0.2,0) {$\bullet$};
			\node[draw=none] (r) at (0.4,0.2) {$_7$};					
			\draw[fill=none](0.2,0.018) circle (0.12);						
			\node[draw=none] (p2) at (0,-1) {$\bullet$};
			\node[draw=none] (p2) at (0.2,-0.8) {$_3$};					
			\node[draw=none] (p3) at (0.8,-1) {$\bullet$};
			\node[draw=none] (p3) at (1,-0.8) {$_2$};					
			\node[draw=none] (p4) at (1.3,-1.7) {$\bullet$};
			\node[draw=none] (p4) at (1.5,-1.6) {$_1$};					
			\node[draw=none] (p5) at (0.7,-1.7) {$\bullet$};
			\node[draw=none] (p5) at (0.9,-1.6) {$_1$};								
			\node[draw=none] (p6) at (-0.3,-1.7) {$\bullet$};
			\node[draw=none] (p6) at (-0.1,-1.6) {$_1$};								
			\node[draw=none] (p7) at (0.8,-0.4) {$\bullet$};	
			\node[draw=none] (p7) at (1,-0.3) {$_1$};
			\node[draw=none] (p7) at (0.8,0.4) {$\bullet$};	
			\node[draw=none] (p7) at (1,0.5) {$_1$};									
			\draw[-] (0.2,0) edge[bend right] (0,-1);		
			\draw[-] (0,-1) to (0.8,-1);	
			\draw[-] (0.8,-1) edge[bend left] (1.3,-1.7);
			\draw[-] (0.8,-1) edge[bend left] (0.7,-1.7);	
			\draw[-] (0,-1) edge[bend right] (-0.3,-1.7);
			\draw[-] (0.2,0) edge[bend right] (0.8,-0.4);			
			\draw[-] (0.2,0) edge[bend right] (0.8,0.4);																
		\end{tikzpicture}
		\caption{}
	\end{subfigure}
	\caption{The consistent diagram (a) and its completion (b).\label{com}}	
\end{figure}

\begin{lemma}
	For each consistent abstract weighted Enriques diagram $(D,\nu)$ there exists
	exactly one minimal consistent abstract weighted Enriques diagram $(D_{\min
	},\nu_{\min})$ which represents the same singularity as $(D,\nu). $
\end{lemma}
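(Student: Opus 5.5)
We first fix what "minimal" should mean: $(D',\nu')$ is below $(D,\nu)$ if $D'$ is a rooted subtree of $D$ containing the root, with the restricted proximity relation and $\nu'=\nu|_{D'}$. Equivalently, $D'$ is obtained from $D$ by repeatedly deleting final vertices. The plan is to build $(D_{\min},\nu_{\min})$ directly from the completion $(\bar D,\bar\nu)$ of $(D,\nu)$. Two consistent diagrams are equivalent exactly when they have the same completion, since the completion encodes the singularity uniquely. So minimality is to be taken among the consistent diagrams whose completion is $(\bar D,\bar\nu)$.

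The key observation concerns a free final vertex $Q$ of weight $1$ in a consistent diagram, with predecessor $P$. Deleting $Q$ keeps inequality (\ref{1}) at $P$, since the sum over $Q\to P$ decreases. It also keeps (\ref{1}) at the vertex $P'$ to which $P$ is proximate, because $Q$ is proximate only to $P$. The completion then re-adds exactly one free vertex of weight $1$ at $P$, so the completion is unchanged. A similar check shows that deleting vertices of weight $0$ never changes the completion, because completion removes them anyway. Conversely, deleting a vertex of positive weight that is satellite, or that is non-final after completion, does change the completion. Such a vertex either carries proximity information, or the free vertices re-added at its predecessor are final, so their successors are lost.

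With this observation we define $D_{\min}$ inside $\bar D$. Remove all free final vertices of weight $1$. Then iterate: remove any free vertex $P$ of weight $1$ that has become final. This is harmless, because a vertex of weight $1$ has at most one successor, of weight $\le 1$. We also remove weight-$0$ vertices. Each step preserves consistency and the completion. The resulting diagram is the set of vertices of $\bar D$ that are the root, or are satellite, or lie on a path from the root to a satellite vertex or to a vertex with $\bar\nu(P)\ge2$. This set is intrinsic to $\bar D$, so it does not depend on choices.

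Uniqueness and minimality come next. Any consistent $(D',\nu')$ equivalent to $(D,\nu)$ has, after discarding its weight-$0$ vertices, its vertex set embedded in $\bar D$. It must contain every satellite vertex of $\bar D$, because completion only adds free vertices. It must also contain every vertex of weight $\ge 2$, because completion only adds weight-$1$ leaves. Being a tree, it then contains all predecessors of these vertices. Hence $D_{\min}\subseteq D'$, which gives existence of a least element and therefore uniqueness. The main obstacle is the identification of $D'$ with a subtree of $\bar D$, that is, making precise that equivalent diagrams are compared inside one common completed tree. We would handle this by noting that completion only adds free leaves and deletes zero vertices. It thus induces a canonical inclusion of the weight-positive part of $D'$ into $\bar D$ that respects proximity, and uniqueness of the Enriques diagram of a singularity makes the completions coincide.
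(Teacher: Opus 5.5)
There is a genuine gap in how you identify equivalent diagrams. Your pruning itself is right: the set you keep (the root, vertices of weight $\ge 2$, satellites, and vertices preceding satellites) is the paper's minimal diagram, defined just after the lemma. The paper does not prove the lemma; it refers to \cite{Zak17}, Theorem 2.12. The problem is your claim that equivalent diagrams have the same completion. Completion only adds free weight-$1$ vertices and deletes weight-$0$ ones, so a superfluous tail of free weight-$1$ vertices survives it. For example, take $D=\{R\}$ with $\nu(R)=2$, and $D'=\{R,X,Y\}$ with $X$ a free successor of $R$, $Y$ a free successor of $X$, and weights $2,1,1$. Both are consistent and both represent a node, yet $\bar D$ has depth $1$ while $\bar D'$ has depth at least $2$. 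So ``uniqueness of the Enriques diagram of a singularity makes the completions coincide'' is false, and $D'$ does not embed in $\bar D$. Your uniqueness argument fails exactly at the step you flagged as the main obstacle. The existence half has the same defect. With the convention of Figure~\ref{com}, nothing is added at a final free vertex of weight $1$. So if you delete a final free weight-$1$ vertex whose predecessor $P$ is free of weight $1$, then $P$ becomes final and nothing is re-added there, and the completion changes. Your own ``conversely'' remark predicts this, since $P$ was non-final in $\bar D$. What your pruning preserves is only the represented singularity, not the completion.

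The fix is to replace the completion by a genuinely canonical invariant. Show that the singularity represented by $(D,\nu)$, i.e.\ the generic curve through the weighted cluster, has exactly your $D_{\min}$ as its cluster of singular points. The other positive-weight vertices are free simple points followed only by free points. Then use the classical theorem that this weighted diagram, with its proximities, is a complete topological invariant \cite{CA00}. This gives $D'_{\min}\cong D_{\min}$ for every equivalent $D'$. Your remark that completion adds only free weight-$1$ leaves then correctly gives $D'_{\min}\subseteq D'$ after removing weight-$0$ vertices, which yields both existence and uniqueness. Note that $\bar D$ is not needed at all: $D_{\min}$ can be read off the positive-weight part of $D$ directly. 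Finally, check that your least element satisfies the paper's syntactic minimality: no weight-$0$ vertices, and no free weight-$1$ vertices except those to which a satellite is proximate. Suppose a free $P$ with $\nu(P)=1$ precedes a positive-weight satellite $Q$. Consistency forces weight $1$ along the path from $P$ to $Q$, and forbids $Q$ to be proximate to any vertex of this path other than its predecessor. Iterating axiom 4 upwards from $Q$ then shows that the successor of $P$ is a satellite proximate to $P$.
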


The \textbf{minimality} means that $D$ has no vertices of the weight $0$ and
no free vertices of the weight $1$, except those $P\in D$ for which there exists
a satellite vertex $Q\in D$ satisfying $Q\rightarrow P.$ By the lemma and
known theorems, we get

\begin{theorem}
	There exists a bijection between minimal consistent abstract weighted Enriques
	diagrams and topological types of singularities.
\end{theorem}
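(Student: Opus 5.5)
The plan is to build the bijection as a composition of two correspondences: minimal consistent diagrams $\leftrightarrow$ complete diagrams $\leftrightarrow$ topological types. The second correspondence is the classical one. By the theory of Enriques diagrams \cite{CA00}, the (unweighted) Enriques diagram of $f_0$, built from the infinitely near points on $V(f_0)$ in the minimal resolution, is a complete topological invariant. Conversely, every complete abstract weighted Enriques diagram is realized by some reduced germ. Moreover, in a complete diagram the weights are determined by the tree and the proximity relation: one recovers them from the leaves upward, using $\nu(P)=1$ at free final vertices and $\nu(P)=\sum_{Q\to P}\nu(Q)$ at non-final ones. Hence complete diagrams, up to isomorphism of rooted trees with proximity, correspond bijectively to topological types. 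I will quote this from \cite{CA00} rather than reprove it.

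The first correspondence comes from the completion procedure and the lemma above. Map a minimal consistent diagram $(D_{\min},\nu_{\min})$ to its completion, which is well defined by the construction described before the lemma. Since every consistent diagram represents the singularity of its completion, this map is the natural candidate for the bijection.

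For \emph{surjectivity}, take a topological type and let $(D,\nu)$ be its complete diagram. It is in particular consistent, so the lemma gives a minimal $(D_{\min},\nu_{\min})$ representing the same singularity. Its completion is therefore the complete diagram of that type.

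For \emph{injectivity}, suppose two minimal consistent diagrams have the same completion, hence represent the same topological type. The step I must justify is that "representing the same singularity" is exactly the equivalence relation "having isomorphic completions". This holds because the completion is unique, and because a complete diagram is its own completion. Granting that, both minimal diagrams are minimal consistent diagrams representing the singularity of one fixed complete diagram $(D,\nu)$, and the uniqueness part of the lemma forces them to coincide.

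I expect the main obstacle to be precisely this point: the lemma's uniqueness must be applied relative to a single singularity, so I need that equivalence of consistent diagrams is determined by completions. I would verify two facts directly. First, completion commutes with deleting weight-$0$ vertices and with adding or removing free weight-$1$ leaves. Second, the minimal diagram is obtained from the completion by deleting exactly those free weight-$1$ vertices $P$ to which no satellite vertex is proximate. Such a deletion is reversible by completion, since it does not change the excess $\nu(P)-\sum_{Q\to P}\nu(Q)$ in a way that loses information. This makes the inverse map explicit: complete diagram $\mapsto$ its canonical pruning, which gives the bijection.
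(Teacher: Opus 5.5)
Your overall route is the paper's. Its proof is literally ``by the lemma and known theorems'': the classical fact that the Enriques diagram determines the topological type, plus existence and uniqueness of a minimal representative. The trouble is the detour through completions, which rests on a claim that is false in the paper's framework, and you single out exactly that claim as the step to verify.

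\textbf{The false claim.} Complete diagrams are \emph{not} in bijection with topological types. Appending a free vertex of weight $1$ to a final free vertex of weight $1$ keeps all three completeness conditions and does not change the type.
\begin{itemize}
\item Take the root $R$ of weight $1$ with one free successor of weight $1$, and the bamboo $P_2\to P_1\to R$ of free vertices all of weight $1$. Both are complete, so by your own remark each is its own completion. Both represent a smooth germ, yet they are not isomorphic.
\item Likewise for $x^n+y^n$, you may lengthen one of the $n$ free leaves of the completed diagram.
\end{itemize}
Consequently:
\begin{itemize}
\item ``Representing the same singularity'' is not the relation ``having isomorphic completions'', so the verification you plan for your main obstacle cannot succeed.
\item Your first paragraph misquotes the classical theorem. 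It says the canonical diagram $E(f_0)$ of the minimal resolution is a complete invariant. It does not say that every complete abstract diagram is some $E(f_0)$, nor that distinct complete diagrams give distinct types.
\item Your injectivity paragraph starts from the wrong hypothesis: equal completions rather than equal topological type.
\item In the surjectivity paragraph, ``the complete diagram of that type'' is not well defined.
\end{itemize}

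\textbf{The fix.} None of this machinery is needed.
\begin{itemize}
\item Send a minimal diagram to the topological type it represents. This is well defined because every consistent diagram represents a unique singularity.
\item Surjectivity: $E(f_0)$ is consistent, and the lemma gives a minimal diagram representing the same singularity. Nothing about its completion is required.
\item Injectivity: suppose minimal $M_1,M_2$ represent the same type. Apply the lemma to $(D,\nu)=M_1$: exactly one minimal diagram represents the singularity of $M_1$. Both $M_1$ and $M_2$ qualify, so $M_1=M_2$.
\end{itemize}

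Your pruning description (delete the free weight-$1$ vertices to which no satellite is proximate) is correct as an inverse of completion \emph{restricted to minimal diagrams}. It is only an aside, though. Knowing that two different complete diagrams of the same type prune to the same minimal diagram again comes down to the uniqueness in the lemma.
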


In the sequel an \textbf{Enriques diagram} means a consistent abstract weighted Enriques diagram. For Enriques diagrams we may define, as in the
classical case, the total order function, the Milnor number, and other
notions. So, for an Enriques diagram $(D,\nu)$ we define

1. the \textbf{total order} $\ord_{\nu}$ at vertices $P\in$
$D$
\begin{equation}
	\ord_{\nu}(P):=\left\{
	\begin{array}
		[c]{ll}%
		\nu(P) & \text{if $P$ is the root,}\\
		\nu(P)+\sum\limits_{P\rightarrow Q}\ord_\nu (Q) & \text{otherwise.}%
	\end{array}
	\right.  \label{ord}%
\end{equation}

2. the \textbf{Milnor number} of $(D,\nu)$
\[
\mu(D,\nu):=\sum_{P\in D}\nu(P)(\nu(P)-1)+1-r_{D},
\]
where
\begin{equation}\label{branch}
	r_{D}:=\sum_{P\in D}\left(  \nu(P)-\sum_{Q\rightarrow P}\nu(Q)\right)  .
\end{equation}

Of course, if $(D,\nu)$ is a complete Enriques diagram representing a
singularity $f_{0}$, then the above notions coincide with classical notions:
the orders of total transforms of $f_{0}$ at vertices of $D$ and the Milnor number
$\mu(f_{0})$ of $f_{0}.$ M. Alberich-Carrami\~{n}ana and J. Ro\'{e} in
\cite{ACR05} Theorem 1.3 and Remark 1.4, gave a necessary and sufficient condition
for two Enriques diagrams of singularities to be linearly adjacent, which means
that there exists a linear deformation for which the first diagram is the
Enriques diagram of a specific member of the deformation and the second diagram
is the Enriques diagram of the generic member.

\begin{theorem}
	\label{CR}For two minimal Enriques diagrams $(D_{0},\nu_{0})$ and $(D,\nu)$ the following conditions are equivalent:
	
	1. For every singularity $f_{0}$ with the Enriques diagram $(D_{0},\nu_{0})$
	there is a linear deformation $(f_{s})_{s\in S}$ of $f_{0}$ such that the
	Enriques diagram of a generic element $f_{s}$ is $(D,\nu)$.
	
	2. There exists a singularity $f_{0}$ with the Enriques diagram $(D_{0}%
	,\nu_{0})$ for which there is a linear deformation $(f_{s})_{s\in S}$ of
	$f_{0}$ such that the Enriques diagram of a generic element $f_{s}$ is
	$(D,\nu)$.
	
	3. There exists an Enriques diagram $(D_{0}^{\prime},\nu_{0}^{\prime})$
	equivalent to $(D_{0},\nu_{0})$ such that $D_{0}^{\prime}$ and $D$ have
	isomorphic (without weights) subdiagrams $\widetilde{D}_{0}^{\prime}\subset
	D_{0}^{\prime}$ and $\widetilde{D}\subset D$ such that for a new weight
	$\tilde{\nu}$ on $D$ induced from $D_{0}^{\prime}$ by this isomorphism (for
	vertices in $D\setminus\widetilde{D}$ we put the weights $0$), the two total
	order functions on $D$ (calculated by formula (\ref{ord})) satisfy
	$$
	\ord _{\tilde{\nu}}(P)\geq\ord _{\nu}(P),P\in D.
	$$
	
\end{theorem}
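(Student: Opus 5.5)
The implication $1\Rightarrow 2$ is trivial. The plan is to prove $2\Rightarrow 3$ and $3\Rightarrow 1$. Throughout I translate Enriques diagrams into \emph{weighted clusters} of infinitely near points and their complete ideals, in the sense of Casas-Alvero \cite{CA00}. For a weighted cluster $K=(D,\nu)$ of actual infinitely near points, let $I_K$ be the ideal of germs $h$ whose total transform has order at least $\ord_\nu(P)$ at every $P\in D$. The key fact I will use is the following. If $\nu$ is consistent in the sense of (\ref{1}), then $K$ is unloaded, and a generic $h\in I_K$ goes through $K$ with effective multiplicities exactly $\nu$ and has no further singular points. Such an $h$ is therefore a singularity whose (completed) Enriques diagram is equivalent to $(D,\nu)$.

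For $2\Rightarrow 3$, fix $f_0$ and a linear deformation $f_s=f_0+sg$ whose generic member has diagram $(D,\nu)$. Realize $D$ as the cluster of infinitely near points of $V(f_s)$ for generic $s$. In the pencil spanned by $f_0$ and $g$, the total order at each point $P$ is minimal on a Zariski open set of members, so $\ord_P(f_0)\geq\ord_P(f_s)=\ord_\nu(P)$ for every $P\in D$.

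Next let $D_0'$ be the union of the cluster of $f_0$ with those points of $D$ that lie on the relevant strict transforms, weighted by the multiplicities of the strict transforms of $f_0$. This gives some zero weights and some free weight-one points. $(D_0',\nu_0')$ is consistent and equivalent to $(D_0,\nu_0)$, because we only added weight-zero vertices or split free weight-one tails. Take $\widetilde D=\widetilde D_0'=D\cap D_0'$ and define $\tilde\nu$ on $D$ as the multiplicity of the strict transform of $f_0$, which is zero off $\widetilde D$. Then $\ord_{\tilde\nu}(P)$ is exactly the total order of $f_0$ at $P$, by the proximity formula (\ref{ord}), and the inequality follows.

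For $3\Rightarrow 1$, take an arbitrary $f_0$ with diagram $(D_0,\nu_0)$; by equivalence it has an actual cluster realizing $D_0'$. Realize $D$ as an actual cluster extending the copy of $\widetilde D$ inside the cluster of $f_0$. The remaining free points are chosen arbitrarily and the satellite points are forced, which is possible because the isomorphism preserves proximity. Then the hypothesis $\ord_{\tilde\nu}\geq\ord_\nu$ says precisely that $f_0\in I_K$ with $K=(D,\nu)$.

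Now choose $g\in I_K$ generic. The members of $f_0+sg$ lie in $I_K$. After passing to the finite-dimensional quotient $I_K/\mathfrak m^N$ for large $N$ (finite determinacy), the set $U$ of elements with exact multiplicities $\nu$ and no extra singular points is a nonempty Zariski open subset. Pick $g$ so that the affine line $f_0+\mathbb C g$ meets $U$; this is possible because $f_0+g\in U$ for generic $g$. Then the line meets $U$ for all but finitely many $s$, so the generic $f_s$ has Enriques diagram $(D,\nu)$, whose minimal form is the given minimal diagram.

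The main obstacle is the key fact in the first paragraph: that consistency of $\nu$ forces the generic member of $I_K$ to have \emph{exactly} the effective multiplicities $\nu$ and nothing more. This is the unloading/Bertini theorem for clusters, and I would cite it from \cite{CA00} rather than reprove it. A secondary delicate point, in $2\Rightarrow 3$, is checking that the enlarged diagram $D_0'$ really is equivalent to $D_0$ and remains consistent.
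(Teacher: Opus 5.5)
The paper gives no proof of Theorem~\ref{CR}; it quotes it from \cite{ACR05} (Theorem 1.3 and Remark 1.4). Your outline via weighted clusters is a natural route: $1\Rightarrow 2$ is trivial, $2\Rightarrow 3$ compares valuations in the pencil, and $3\Rightarrow 1$ uses the generic member of $I_K$. However, two steps fail as written.

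\textbf{The step $2\Rightarrow 3$.} Write $v_P(h)$ for the order of the total transform of $h$ at $P$. You realize $D$ by ``the cluster of $V(f_s)$ for generic $s$''. You then use that $v_P(f_0+sg)=\min(v_P(f_0),v_P(g))$ for all but at most one $s$. That fact concerns a point $P$ chosen independently of $s$. But infinitely near points of $V(f_s)$ do move with $s$, and at a moving point the inequality you want is false. For example, take $f_s=y^2+sxy-x^4$. The tangent direction $P_s=\{y=-sx\}$ moves, and $v_{P_{s_1}}(f_{s_1})=3>2=v_{P_{s_1}}(f_0)$.

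The missing ingredient is a Bertini-type statement with two parts:
\begin{itemize}
\item the points of the \emph{minimal} diagram of the generic $f_s$ are base points of the pencil $\langle f_0,g\rangle$, so they are the same for all but finitely many $s$;
\item away from its base points, the generic member has only free simple points transversal to the exceptional divisor.
\end{itemize}
This follows from the results on base points of linear systems in \cite{CA00}. It can also be proved inductively from the fact that the generic member of a pencil of binary forms is reduced away from the common roots. Only with this does $\ord_\nu(P)=\min(v_P(f_0),v_P(g))\le v_P(f_0)$ follow for every $P\in D$.

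\textbf{The key fact.} As stated, it is wrong: exact multiplicities $\nu$ on $K$ plus no singular points outside $K$ do not determine the Enriques diagram. Take $K=\{O\}$ with $\nu(O)=2$. The cusp $y^2-x^3\in I_K=\mathfrak m^2$ has multiplicity $2$ at $O$ and no singular point outside $K$, yet it is not a node. You need Casas-Alvero's notion of going \emph{sharply} through $K$, which also excludes satellite points outside $K$. For consistent $K$, such germs form a nonempty Zariski-open subset of $I_K$ modulo a high power of $\mathfrak m$. With $U$ defined this way, your pencil argument for $3\Rightarrow 1$ goes through.

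\textbf{Two smaller corrections.}
\begin{itemize}
\item The hypothesis in item 3 does not say \emph{precisely} that $f_0\in I_K$. Off $\widetilde D$, the strict transform of $f_0$ may pass through a newly placed point where $\tilde\nu=0$. It does imply $v_P(f_0)\ge\ord_{\tilde\nu}(P)\ge\ord_\nu(P)$, which is all you need.
\item Realizing $(D_0',\nu_0')$ on an arbitrary $f_0$ needs an argument. At each $P\in D_0$, the extra free weight-one vertices number at most $\nu_0(P)-\sum_{Q\to P,\,Q\in D_0}\nu_0(Q)$. This is exactly the number of non-essential free points of $f_0$ in the first neighbourhood of $P$, so they can be placed there. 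Weight-zero vertices go off $f_0$; for satellite ones this is automatic by axiom 5.
\end{itemize}
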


The condition in item $3$ of the theorem is called \textbf{linear adjacency} of
$(D_{0},\nu_{0})$ to $(D,\nu).$ In general, for arbitrary two Enriques
diagrams $(D_{0},\nu_{0})$ and $(D,\nu)$ we say that $(D_{0}%
,\nu_{0})$ is \textbf{linearly adjacent} to $(D,\nu)$ if and only if their
minimal Enriques diagrams are linearly adjacent. We denote this by $(D_{0}%
,\nu_{0})\rightarrow(D,\nu).$

Since the Milnor number is upper semi-continuous in families of isolated
singularities (\cite{GLS} Theorem 2.6 I and Proposition 2.57 II) we get the
following corollary

\begin{corollary}
	If $(D,\nu)$ and $(D_{0},\nu_{0})$ are Enriques diagrams and
	$(D_{0},\nu_{0})$ is \textbf{linearly adjacent} to $(D,\nu)$ then
	$$
	\mu(D_{0},\nu_{0})\geq\mu(D,\nu).
	$$
	
\end{corollary}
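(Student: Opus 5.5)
The plan is to reduce the corollary to the upper semi-continuity of the Milnor number via Theorem \ref{CR}. By definition, $(D_{0},\nu_{0})\rightarrow(D,\nu)$ means that the minimal diagrams $(D_{0,\min},\nu_{0,\min})$ and $(D_{\min},\nu_{\min})$ are linearly adjacent in the sense of item 3 of Theorem \ref{CR}. Two preliminary facts let us pass freely between a diagram and its minimal representative.

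\begin{enumerate}
\item[(a)] Every consistent diagram represents a singularity, and equivalent diagrams represent the same singularity. By the lemma and the bijection theorem, $(D_{0},\nu_{0})$ and $(D_{0,\min},\nu_{0,\min})$ represent the same topological type, and likewise for $(D,\nu)$.
\item[(b)] The Milnor number $\mu(D,\nu)$ defined by the formula above is an invariant of the equivalence class of consistent diagrams. Hence it equals the classical Milnor number of any singularity the diagram represents.
\end{enumerate}

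For (b), note that on a complete diagram the formula is the classical one. It therefore suffices to check that the formula is unchanged under the two elementary operations relating a consistent diagram to its completion:
\begin{itemize}
\item \emph{Removing a vertex of weight $0$.} Such a vertex contributes $0\cdot(0-1)=0$ to the first sum. It contributes nothing to $r_{D}$, either as its own term or as a summand $\nu(Q)$ in another vertex's term.
\item \emph{Adding $k=\nu(P)-\sum_{Q\rightarrow P}\nu(Q)$ free leaves of weight $1$ at $P$.} Each new leaf contributes $1\cdot 0=0$ to the first sum. Each new leaf contributes $1-0=1$ to $r_{D}$, while the term of $P$ in $r_{D}$ drops by exactly $k$. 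The total is unchanged.
\end{itemize}
Minimal diagrams are obtained from the completion by the reverse of these operations, so the same check covers them. This invariance check is the only genuinely computational step, and it is where I expect any subtlety to lie. One point to confirm is that a new leaf is proximate only to $P$ and not to any other vertex; this holds because the leaf is free.

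With these facts the main argument is short. Apply the implication item 3 $\Rightarrow$ item 2 of Theorem \ref{CR} to the minimal diagrams. This gives a singularity $f_{0}$ with Enriques diagram $(D_{0,\min},\nu_{0,\min})$ and a linear deformation $f_{s}=f_{0}+sg$ whose generic member $f_{s}$ has Enriques diagram $(D_{\min},\nu_{\min})$. This is a one-parameter holomorphic family of isolated singularities. By upper semi-continuity of the Milnor number (\cite{GLS}, Theorem 2.6 I and Proposition 2.57 II), $\mu(f_{0})\geq\mu(f_{s})$ for generic small $s$. Using (a) and (b), we have $\mu(f_{0})=\mu(D_{0},\nu_{0})$ and $\mu(f_{s})=\mu(D,\nu)$, which gives the inequality.
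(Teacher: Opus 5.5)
Your proposal is correct and follows the paper's route, which derives the corollary in one line from Theorem \ref{CR} (item 3 $\Rightarrow$ item 2 applied to the minimal diagrams) together with upper semi-continuity of the Milnor number. Your extra check that $\mu(D,\nu)$ is unchanged under removing weight-$0$ vertices and adding free weight-$1$ leaves is sound; note that a weight-$0$ vertex's own term in $r_D$ vanishes only because consistency forces every vertex proximate to it to have weight $0$. This check makes explicit the identification of the diagrammatic and classical Milnor numbers that the paper takes for granted.
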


\section{The jump of the Milnor number in linear deformations}

Let $f_{0}$ be a singularity and $(f_{s})_{s\in S}$ its holomorphic
deformation, where $S$ is a neighbourhood of $0\in\mathbb{C}$. Since the Milnor
number $\mu_{s}:=\mu(f_{s},0)$, $s\in S$, is upper semi-continuous in the
Zariski topology we may assume that $\mu_{s}=\text{const.}$ for $s\in
S\setminus\{0\}$ and $\mu_{0}\geq\mu_{s}$ The constant difference $\mu_{0}%
-\mu_{s}$ (for $s\neq0$) will be called \textbf{the jump of the deformation }
$(f_{s})$ and denoted by $\lambda((f_{s}))$. The smallest non-zero value among
the jumps of all deformations of $f_{0}$ will be called \textbf{the jump of the
	Milnor number of the singularity} $f_{0}$ and denoted by $\lambda(f_{0})$.

If we restrict the class of deformations of $f_{0}$ to a specific one
$\mathcal{D}$ this jump will be denoted by $\lambda^{\mathcal{D}}(f_{0})$. In
particular, $\lambda^{lin}(f_{0})$ is the jump (called in the sequel \textbf{linear
	jump} of $f_{0}$) in the class of \textbf{linear deformations} $f_{s}=f_{0}%
+sg$, where $g$ is holomorphic, $g(0,0)=0$. Similarly, $\lambda^{nd}(f_{0})$ is the jump in the
class of \textbf{non-degenerate deformations} (i.e., each $f_{s}$ is
non-degenerate in the Kushnirenko sense). In general, the jump $\lambda(f_0)$ is not
a topological invariant of $f_{0}$ \cite{BKW21}. However, if we restrict
deformations to linear ones, this is true by Theorem \ref{CR}.

\begin{proposition}
	[\cite{doktorat}]If singularities $f_{0}$ and $\tilde{f_{0}}$ have the
	same topological type, then $\lambda^{lin}(f_{0})=\lambda^{lin}(\tilde{f_{0}})$.
\end{proposition}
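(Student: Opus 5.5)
The plan is to deduce the statement directly from Theorem \ref{CR}, together with the fact that the topological type of a singularity determines its minimal Enriques diagram. Let $f_0$ and $\tilde f_0$ have the same topological type. By the bijection between minimal consistent abstract weighted Enriques diagrams and topological types, they then have the same minimal Enriques diagram $(D_0,\nu_0)$. I will show that the set of jumps realised by linear deformations of $f_0$ coincides with the set realised by linear deformations of $\tilde f_0$. Their smallest non-zero elements then agree, which gives $\lambda^{lin}(f_0)=\lambda^{lin}(\tilde f_0)$.

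Take a linear deformation $f_s=f_0+sg$ with non-zero jump $\lambda((f_s))$, and let $(D,\nu)$ be the minimal Enriques diagram of a generic member $f_s$, $s\neq 0$. I first need that the generic members of a linear deformation share a single topological type, so that $(D,\nu)$ is well defined. This follows because $\mu$ is constant for small $s\neq0$, and the Enriques diagram is constant on a Zariski-open subset of the parameter line (equisingularity stratification). With this in hand, condition 2 of Theorem \ref{CR} holds for the pair $(D_0,\nu_0)$, $(D,\nu)$. By the equivalence $2\Rightarrow1$, every singularity with diagram $(D_0,\nu_0)$, in particular $\tilde f_0$, admits a linear deformation $(\tilde f_s)$ whose generic member has diagram $(D,\nu)$.

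Since the Milnor number is a topological invariant, and can be read off from the Enriques diagram via the formula for $\mu(D,\nu)$, we obtain
$$\mu(\tilde f_0)-\mu(\tilde f_s)=\mu(D_0,\nu_0)-\mu(D,\nu)=\mu(f_0)-\mu(f_s).$$
Hence $\lambda((\tilde f_s))=\lambda((f_s))$. Exchanging the roles of $f_0$ and $\tilde f_0$ gives the reverse inclusion of the sets of jumps, which completes the argument.

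The main obstacle is matching what Theorem \ref{CR} literally provides with what the jump requires. The theorem speaks of a linear deformation $(f_s)_{s\in S}$ with $S$ possibly multi-parameter, and of a ``generic element''. The jump, however, is defined for one-parameter deformations $f_0+sg$ with $g(0,0)=0$. I would handle this by restricting to a generic line through $0$ in $S$. Such a line yields a one-parameter family $f_0+s g$ whose members for small $s\neq0$ lie in the generic open stratum, so they have diagram $(D,\nu)$. The condition $g(0,0)=0$ is automatic, since the generic member is a singularity at $0$.
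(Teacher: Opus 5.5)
Your argument is correct and is essentially the paper's own: the paper just states that the proposition holds by Theorem~\ref{CR}. Your proof spells that out — same minimal Enriques diagram, the implication $2\Rightarrow 1$ applied to $\tilde f_0$, and topological invariance of $\mu$ — and the detour through a multi-parameter $S$ is unnecessary (the paper takes $S$ to be a neighbourhood of $0\in\mathbb{C}$) but harmless.
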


Results concerning the jumps in various classes of deformations and
singularities one can be found in \cite{GZ93,Bod07,Wal13,KW19,BKW21}. The basic fact, which make this theory interesting, is that there
exist singularities for which $\lambda(f_{0})>1$ \cite{GZ93}. For instance,
for $f_{0}(x,y)=x^{n}+y^{n}$, $n\geq3$, we have $\lambda(f_{0})=\left[
\frac{n}{2}\right]  $ \cite{BKW21}, $\lambda^{lin}(f_{0})=n-2$ \cite{doktorat}, and
$\lambda^{nd}(f_{0})=n-1$ \cite{Wal13}.

\bigskip The main theorem of the article concerns linear jumps of homogeneous
singularities with a fixed order of deformations.

\begin{theorem}
	\label{main}If $f_{0}$ is a homogeneous singularity of order $n\geq3$ the
	jump of $f_{0}$ in the class of linear deformations $f_{s}=f_{0}+sg$ of order $j$, $2\leq j<n$ is equal to $(n-j)(n-2).$
\end{theorem}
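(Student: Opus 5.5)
The plan is to compute $\mu(f_0)=(n-1)^2$ and to show that the largest Milnor number of a generic member $f_s=f_0+sg$ with $\ord g=j$ (so $\ord f_s=j$ for $s\neq 0$) is $(n-2)j+1$. The jump is then
$$(n-1)^2-(n-2)j-1=(n-2)(n-j).$$
By the Proposition of \cite{doktorat} and Theorem \ref{CR}, the question depends only on the topological type of $f_0$, namely $n$ distinct lines. So we may choose convenient coordinates. There are two parts: a construction attaining $(n-2)j+1$, and an upper bound.

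\emph{Construction.} Choose coordinates so that $u=0$ is one of the lines. Write $f_0=u\,h(u,y)$ with $h$ homogeneous of degree $n-1$ and containing the monomial $y^{n-1}$. Take $g=u^j$. Then
$$f_s=u\,k_s,\qquad k_s=h+s\,u^{j-1}.$$
The Newton polygon of $k_s$ has vertices $(j-1,0)$ and $(0,n-1)$, since the terms of $h$ lie on or above the segment joining them. For generic $s$, $k_s$ is Kushnirenko non-degenerate, so $\mu(k_s)=(j-2)(n-2)$. Moreover $(u\cdot k_s)_0=n-1$. By the formula $\mu(\phi\psi)=\mu(\phi)+\mu(\psi)+2(\phi\cdot\psi)-1$,
$$\mu(f_s)=(j-2)(n-2)+0+2(n-1)-1=(n-2)j+1.$$
For $j=2$, $k_s$ is smooth and the same formula still gives $2n-3$. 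Equivalently, in Enriques-diagram language, the generic diagram has root weight $j$. At the point in the direction $u=0$ the strict transform has weight $j-1$, followed by a chain of free points carrying the smooth branch $u=0$ together with $k_s$.

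\emph{Upper bound.} Let $(D,\nu)$ be the minimal Enriques diagram of generic $f_s$, so $\nu(R)=j$. The minimal diagram of $f_0$ is the single root with weight $n$. By Theorem \ref{CR}(3) there is an equivalent diagram $(D_0',\nu_0')$ whose weighted tree embeds into $D$ and satisfies $\ord_{\tilde\nu}\ge\ord_\nu$ on $D$. Such a $D_0'$ is the root of weight $n$ together with at most $n$ free first-neighbourhood points of weight $1$ (the tangent lines), possibly with weight-$0$ vertices attached. Hence on $D$:
\begin{itemize}
\item $\ord_{\tilde\nu}(R)=n$;
\item $\ord_{\tilde\nu}(P)\le n+1$ at points $P$ of the first neighbourhood lying on a tangent of $f_0$, and $\ord_{\tilde\nu}(P)\le n$ at the other first-neighbourhood points;
\item $\ord_{\tilde\nu}(P)\le n+1$ for deeper points (free successors inherit at most this), with smaller values at satellites, which add nothing beyond the orders of the points they are proximate to.
\end{itemize}
This yields the linear constraints $\ord_\nu(P)\le n+1$ (or $\le n$) on the weights of $D$, together with consistency (\ref{1}).

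\emph{Main obstacle.} The key step is to maximize
$$\mu(D,\nu)=\sum_P\nu(P)(\nu(P)-1)+1-r_D$$
under these constraints with $\nu(R)=j$. For a first-neighbourhood point $P_1$ on a tangent we get $\nu(P_1)\le n+1-j$. Consistency gives $\nu(P_1)\le j$, so $\nu(P_1)\le j-1$ is forced once $j\le n$, and the strict transforms can only decrease. Along a free chain $P_1,P_2,\dots$ the orders grow by $\nu$ at each step, so the chain must terminate once the accumulated order exceeds $n+1$.

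I would first try to show, by an exchange argument, that $\mu$ is maximized by concentrating all of $\nu(R)-1$ on a single tangent direction. The argument would move weight from a branch at one first-neighbourhood point to another, or lengthen a chain, and check that $\mu$ does not decrease. This reduces the optimization to one chain with weights $j-1,\dots$, bounded by the order constraint $\le n+1$. The contributions then sum to $(j-1)(j-2)+(n-j)\cdot(\dots)$, giving at most $(n-2)j+1$, which matches the construction. Satellite configurations need a separate check: they have smaller $\ord_{\tilde\nu}$ and give no improvement. This case analysis is where I expect the real work.
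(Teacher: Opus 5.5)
Your construction is correct, and it is much shorter than the paper's. With $g=u^j$ you get $f_s=u\,k_s$, where $k_s$ is convenient and Kushnirenko non-degenerate for every $s\neq0$. Hence $\mu(k_s)=(j-2)(n-2)$, $(u\cdot k_s)_0=n-1$, and $\mu(f_s)=j(n-2)+1$. The paper instead builds a bamboo Enriques diagram by an inductive algorithm, tracking $a_i,\dots,f_i$ and $V(i)$, with a separate reduction when $2j<n+1$, and realizes it through Theorem~\ref{CR}. Your explicit $g$ avoids adjacency theory for this half entirely. Your diagrammatic side remark is wrong, though: the strict transform of $f_s$ at the first-neighbourhood point has multiplicity $\min(j,n-j+1)$, not $j-1$, and satellites do occur.

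The genuine gap is the upper bound, which is only a plan, and the plan starts from false constraints. By (\ref{ord}), the induced total order at a satellite $Q\to P,P'$ is $\tilde\nu(Q)+\ord_{\tilde\nu}(P)+\ord_{\tilde\nu}(P')$, so it is larger, not smaller. Moreover $D_0'$ may contain chains of free weight-one vertices (the paper uses this when $2j<n+1$), so $\ord_{\tilde\nu}$ can reach $n+m$ at the $m$-th point of such a chain.

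Your own $f_s$ refutes your constraints. For $n=4$, $j=3$ it is $u(u^2+y^3)\sim E_7$, with minimal diagram $R(3)$, $P_1(2)$, $Q(1)$, $Q\to P_1,R$, and $\ord_\nu(Q)=9>n+1$. Over your constraint set the maximum for $(n,j)=(4,3)$ is only $6$, so optimizing there cannot give the right bound. Also, your $f_s$ has $\nu(P_1)=\min(j,n-j+1)=j$ whenever $2j\le n+1$. So the claim that $\nu(P_1)\le j-1$ is forced is false, and the intended reduction to a single chain with weights $j-1,\dots$ has no basis. The exchange argument itself is never given.

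What is missing is an estimate valid for all diagram shapes at once. The paper obtains it from $\delta=\ord_{\tilde\nu}-\ord_\nu\ge0$ and $V(i)=\sum_{P\in L_i'}\delta(P)\varkappa_i(P)\ge0$. The recursion $V(i)\le V(i-1)+\sum_{P\in L_i}(\tilde\nu(P)-\nu(P))\nu(P)$ telescopes, using $V(k)\ge0$, to
\[
\mu(E)\le\nu(R)(\tilde\nu(R)-2)+\sum_{P\ \mathrm{free}}\nu(P)(\tilde\nu(P)-1)+\sum_{P\ \mathrm{satellite}}\nu(P)\tilde\nu(P)+1 .
\]
For homogeneous $f_0$ one has $\tilde\nu(R)=n$, $\tilde\nu\le1$ on free vertices and $\tilde\nu=0$ on satellites. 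These give $\mu\le j(n-2)+1$ immediately, with no optimization over diagram shapes.
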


\begin{remark}\label{nj}
	\bigskip For $n=2$ or $j=1$ the above problem of finding $\lambda(f_0)$ is trivial. So, in the sequel we will assume that $\ord f\geq 3$. In turn, for $j=n$ we have $\ord(f_0+sg)\geq n$ which implies $\mu(f_0+sg)\geq (n-1)^2$, so there are no "jumps" in such cases.
\end{remark}

From the theorem, taking $j=n-1$, we obtain a known result.

\begin{corollary}
	[\cite{doktorat}]\label{wnio}The linear jump of the Milnor number $\lambda^{lin}(f_{0})$ of a
	homogeneous singularity $f_{0}$ of order $n$ is equal to $(n-2).$
\end{corollary}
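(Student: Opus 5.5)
The statement is equivalent to showing that the largest Milnor number of a generic member $f_s=f_0+sg$, $s\neq 0$, with $\ord g=j$ is $(n-2)j+1$. Indeed, $(n-1)^2-\big((n-2)j+1\big)=(n-j)(n-2)$. Since $\ord f_s=j<n$ for $s\neq0$, we always have $\mu(f_s)\le (j-1)^2+\dots<(n-1)^2$, so every such jump is non-zero. The proof therefore splits into a construction realising $\mu=(n-2)j+1$ and an upper bound $\mu(f_s)\le (n-2)j+1$ for every admissible $g$. By the Proposition of \cite{doktorat} and Theorem \ref{CR}, only the topological type matters. So we may take $f_0=y\,h(x,y)$, where $h$ is a product of $n-1$ distinct linear forms, none of them equal to $y$, and $h(x,0)=c\,x^{n-1}$ with $c\neq0$.

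\emph{Construction.} Take $g=y^{j}$. Then
$$f_s=y\,(h+sy^{j-1}),\qquad k_s:=h+sy^{j-1},$$
and $\ord f_s=j$. The factor $k_s$ has Newton polygon with vertices $(0,j-1)$ and $(n-1,0)$. For generic $s$ it is Kushnirenko non-degenerate, because its only edge polynomial is $c\,x^{n-1}+s\,y^{j-1}$ plus the homogeneous part of $h$, which does not lie on that edge unless $j=n$. Hence $\mu(k_s)=(j-2)(n-2)$, and $i(y,k_s)=n-1$. Apply the formula $\mu(ab)=\mu(a)+\mu(b)+2\,i(a,b)-1$:
$$\mu(f_s)=0+(j-2)(n-2)+2(n-1)-1=(n-2)j+1 .$$
I would check the non-degeneracy claim directly, or alternatively compute $\mu(k_s)$ from its Enriques diagram.

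\emph{Upper bound.} Let $(D,\nu)$ be the minimal Enriques diagram of a generic $f_s$. We have $\nu(R)=j$. By Theorem \ref{CR}, item 3, the diagram of $f_0$, suitably enlarged by equivalence, induces a weight $\tilde\nu$ on $D$ with $\ord_{\tilde\nu}\ge\ord_\nu$. The diagram of $f_0$ consists of a root of weight $n$ and $n$ free chains of weight-$1$ points, one for each line. So $\ord_{\tilde\nu}(P)=n+k-1$ at the $k$-th point of a chain following one of the $n$ directions, and $\ord_{\tilde\nu}(P)\le n+(\text{contribution of proximities})$ elsewhere.

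I would derive the local constraints from this. At a first-neighbourhood point $P_i$ we get $j+\nu(P_i)\le n$ or $n+1$, depending on whether $P_i$ is one of the line directions. Along later points, the total orders of $D$ may grow by at most $1$ per free step (more precisely, by at most the growth of $\ord_{\tilde\nu}$). Combined with $\sum_i\nu(P_i)\le j$ and the proximity inequalities (\ref{1}), these constraints make the problem a combinatorial maximisation of
$$\mu(D,\nu)=\sum\nu(\nu-1)+1-r_D$$
over diagrams obeying these inequalities.

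My plan is to prove that the maximum is attained when all the weight concentrates on a single line direction, i.e.\ $\nu(P_1)=j-1$ followed by a chain reproducing the diagram of $y\,k_s$ above. For this I would use an exchange argument: splitting the weight over two first-neighbourhood points, or lowering weights along a chain, decreases $\sum\nu(\nu-1)-r_D$, by convexity of $t\mapsto t(t-1)$. Then induct on the depth of $D$, comparing each chain with the bound forced by $\ord_{\tilde\nu}$. This optimisation, and in particular correctly handling satellite points and the freedom to replace $(D_0,\nu_0)$ by an equivalent diagram, is the main obstacle. If it becomes unwieldy, I would fall back on a direct bound: write $\mu(f_s)=2\delta(f_s)-r(f_s)+1$ and bound $\delta$ via the intersection multiplicity $i(f_s,f_0)=i(sg,f_0)$ together with the order constraints.
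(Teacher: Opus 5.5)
Your construction is correct. With $f_0=yh$ and $g=y^j$, the factor $k_s=h+sy^{j-1}$ is convenient and non-degenerate with Newton edge $cx^{n-1}+sy^{j-1}$. Hence $\mu(k_s)=(n-2)(j-2)$, $i(y,k_s)=n-1$, and $\mu(f_s)=j(n-2)+1$. For $j=n-1$ this gives a jump of exactly $n-2$, so $\lambda^{lin}(f_0)\le n-2$, by a much more direct route than the paper's inductive Enriques-diagram algorithm.

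The gap is the reverse inequality, which is the real content of the corollary. You must show that every linear deformation with $\ord g=j<n$ has generic $\mu(f_s)\le (n-1)(n-2)+1$; deformations with $\ord g\ge n$ have jump $0$ by Remark \ref{nj}. Your proposal gives no argument for this, only a strategy you yourself flag as the main obstacle, and that strategy aims at the wrong target:
\begin{itemize}
\item The maximiser you describe, with $\nu(P_1)=j-1$, violates your own constraint $j+\nu(P_1)\le n+1$ as soon as $j=n-1\ge 4$.
\item The diagram of $y\,k_s$ actually has $\nu(P_1)=1+\min(n-j,j-1)$ followed by satellite points. For $n=4$, $j=3$ it is $E_7$, with weights $3,2,1$ and the last point satellite. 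So an exchange/convexity argument along free chains does not reach the extremal configurations.
\item The fallback via $i(f_s,f_0)=i(sg,f_0)$ breaks down on your own optimal example: there $f_0$ and $g$ share the component $y$, so $i(f_0,g)=\infty$.
\end{itemize}

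The missing idea is a quantity that treats free and satellite points uniformly. In the paper, set $\delta=\ord_{\nu_0}-\ord_\nu\ge 0$ on $E$ and $\varkappa_i(P)=\nu(P)-\sum_{Q\to P,\,Q\in\tilde{E_i}}\nu(Q)\ge 0$. Then $V(i)=\sum_{P\in L_i'}\delta(P)\varkappa_i(P)\ge 0$ satisfies $V(i)\le V(i-1)+\sum_{P\in L_i}(\nu_0(P)-\nu(P))\nu(P)$. Telescoping gives (\ref{ine}) and hence (\ref{nier}), valid for any adjacency. For homogeneous $f_0$ the induced weight is at most $1$ on free points and $0$ on satellites. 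Therefore $\mu(f_s)\le j(n-2)+1\le (n-1)(n-2)+1$, i.e.\ every nonzero linear jump is at least $n-2$.

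Two smaller points. First, your bound $\mu(f_s)\le (j-1)^2+\dots$ is backwards: $\ord f_s=j$ gives the lower bound $\mu(f_s)\ge (j-1)^2$. Non-vanishing of the jump is not needed for the corollary anyway. Second, state the final step explicitly: $(n-j)(n-2)$ is minimised over $2\le j\le n-1$ at $j=n-1$, $j=1$ gives $(n-1)^2$, and $\ord g\ge n$ gives jump $0$.
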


\section{Proof of Theorem \ref{main}}

\begin{proof}
	Let $f_{0}$ be a homogeneous singularity of order $n\geq3$ and fix $j$, $2\leq
	j<n.$ Notice $f_{0}$ is a homogeneous polynomial of degree $n$ which is the
	product of linear, not proportional, factors of the type $ax+by$ in the number
	$n.$ The assertion of the theorem is equivalent to the fact that the greatest
	Milnor number of a linear deformation of $f_{0}$ of order $j$ is equal to
	$(n-2)j+1.$ First, we prove that any linear deformation of $f_{0}$ of order $j$
	has the Milnor number (generic) at most $(n-2)j+1.$ Next, we give an
	algorithm for constructing a linear deformation of $f_{0}$ of order $j$
	which generic Milnor number is exactly $(n-2)j+1.$
	
	First, we show some results concerning Enriques diagrams of arbitrary singularities
	and a linear deformation $f_{s}$ of $f_{0}$. Since the linear jump is a
	topological invariant of $f_{0}$ it suffices to consider the 
	Enriques diagrams associated with $f_{0}$ and a generic $f_{s}$. Let
	$(E_{0},\nu_{0})$ (resp. $(E,\nu))$ be the minimal Enriques
	diagram of $f_{0}$ (resp. of a generic element $f_{s}$ of the linear deformation
	$(f_{s})$) and according to Theorem \ref{CR} we have linear adjacency $(E_{0}%
	,\nu_{0})\rightarrow(E,\nu).$ This adjacency gives a new weight function on
	$E$(\footnote{induced from an Enriques diagram $E_0'$ equivalent to $E_0$ and being an extension of $E_0$ by free vertices of weight 1 see Theorem \ref{CR}.\label{foot}}) which we will also denote by $\nu_{0}.$ By the
	definition of linear adjacency we have%
	\[
	\ord _{\nu_{0}}(P)\geq\ord _{\nu}(P),P\in E.
	\]
	We introduce several notions associated to $E$ and its two weight functions
	$\nu_{0}$ and $\nu,$ and two total order functions $\ord %
	_{\nu_{0}}$ and $\ord _{\nu}.$ Recall the value $\nu_{0}(P),$
	$P\in E,$ is defined by the isomorphism described in the definition of linear
	adjacency. If $P$ is in the image of this isomorphism we preserve the weight,
	and if $P$ is outside this image $\nu_{0}(P)=0$. Let $R$ be the root of $E$
	and $l(R,P)$ be the length of the path from $R$ to $P$ in the tree $E.$ Recall
	it is the number of blowing-ups under which we get $P$ ($P$ is in the
	$l(R,P)$-th infinitesimal neighbourhood of the origin)$.$ We define subsets of
	$E$ depending on the $i$-th infinitesimal neighbourhood of the origin for
	$i=0,1,\ldots,k:=$ the depth of the tree $E:$%
	\begin{align*}
		L_{i}  &  :=\{P\in E:l(R,P)=i\},\\
		\tilde{E_i}  &  :=%
		{\displaystyle\bigcup\limits_{j\leq i}}
		L_{i}=\{P\in E:l(R,P)\leq i\},\\
		L_{i}^{\prime}  &  :=L_{i}\cup\{Q\in E:\exists_{P\in L_{i}}P\rightarrow Q\},\\
		A_{i}  &  :=\{Q\in E:\exists_{P\in L_{i}}P\rightarrow Q\text{ and }Q\text{ is
			the predecessor of }P\},\\
		B_{i}  &  :=\{Q\in E:\exists_{P\in L_{i}}P\rightarrow Q\text{ and }Q\text{ is
			not the predecessor of }P\}.
	\end{align*}
	Obviously, $(\tilde{E_i},\nu_{0}|\tilde{E_i})$ and $(\tilde{E_i},\nu|\tilde{E_i})$ are consistent, and
	$L_{i}^{\prime}=L_{i}\cup A_{i}\cup B_{i}.$ Moreover, $A_{i}\cup B_{i}\subset
	L_{i-1}^{\prime},$ $i=1,\ldots,k$. We define also the following functions:%
	\begin{align*}
		\delta(P)  &  :=\ord _{\nu_{0}}(P)-\ord _{\nu
		}(P)\geq0,\text{ \ \ }P\in E,\\
		\varkappa_{i}(P)  &  :=\nu(P)-\sum_{Q\rightarrow P,Q\in\tilde{E_i}}\nu
		(Q)\geq0,\text{ \ \ }P\in \tilde{E_i},\text{\ }i=0,\ldots,k\\
		V(i)  &  :=\sum_{P\in L_{i}^{\prime}}\delta(P)\varkappa_{i}(P)\geq0,\text{
			\ \ }i=0,\ldots,k.
	\end{align*}
	The following recursive inequality for $V(i)$ plays a crucial role in
	the proof.
	
	\begin{lemma}
		For every $i=1,\ldots,k$ we have
		\begin{equation}
			V(i)\leq V(i-1)+\sum_{P\in L_{i}}(\nu_{0}(P)-\nu(P))\nu(P). \label{rec}%
		\end{equation}
		
	\end{lemma}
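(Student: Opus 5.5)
The plan is to compute $V(i)$ exactly and compare it with $V(i-1)$, splitting $L_i^{\prime}=L_i\cup A_i\cup B_i$. Note that $L_i$ is disjoint from $A_i\cup B_i\subset\tilde{E}_{i-1}$. The only inequality will enter at the very end, from nonnegativity of the terms of $V(i-1)$.

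\emph{Step 1: the values of $\varkappa_i$ on $L_i^{\prime}$.} If $P\in L_i$, every vertex proximate to $P$ lies strictly deeper than $P$, so none lies in $\tilde{E}_i$. Hence $\varkappa_i(P)=\nu(P)$. If $Q\in A_i\cup B_i$, the vertices proximate to $Q$ that lie in $\tilde{E}_i$ are those in $\tilde{E}_{i-1}$ together with those in $L_i$. This gives
\[
\varkappa_i(Q)=\varkappa_{i-1}(Q)-\sum_{P\in L_i,\,P\rightarrow Q}\nu(P).
\]

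\emph{Step 2: expand $\delta$ on $L_i$.} By the recursive formula (\ref{ord}), applied to both $\nu_0$ and $\nu$, for $P\in L_i$ (with $i\geq1$, so $P$ is not the root) we have
\[
\delta(P)=\nu_0(P)-\nu(P)+\sum_{P\rightarrow Q}\delta(Q).
\]
The $Q$'s in this sum are exactly the elements of $A_i\cup B_i$ to which $P$ is proximate. Multiplying by $\nu(P)$, summing over $L_i$, and exchanging the order of summation gives
\[
\sum_{P\in L_i}\delta(P)\nu(P)=\sum_{P\in L_i}(\nu_0(P)-\nu(P))\nu(P)+\sum_{Q\in A_i\cup B_i}\delta(Q)\sum_{P\in L_i,\,P\rightarrow Q}\nu(P).
\]

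\emph{Step 3: combine.} Substitute Steps 1 and 2 into $V(i)=\sum_{P\in L_i}\delta(P)\nu(P)+\sum_{Q\in A_i\cup B_i}\delta(Q)\varkappa_i(Q)$. The double sums cancel, and we obtain the identity
\[
V(i)=\sum_{P\in L_i}(\nu_0(P)-\nu(P))\nu(P)+\sum_{Q\in A_i\cup B_i}\delta(Q)\varkappa_{i-1}(Q).
\]
Since $A_i\cup B_i\subset L_{i-1}^{\prime}$, each summand $\delta(Q)\varkappa_{i-1}(Q)$ with $Q\in L_{i-1}^{\prime}$ is nonnegative. Here $\delta\geq0$ by linear adjacency (Theorem \ref{CR}), and $\varkappa_{i-1}\geq0$ by consistency of $\nu$: a partial sum over proximate vertices is bounded by the full sum. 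Therefore the last sum is at most $V(i-1)$, which yields (\ref{rec}).

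The main point requiring care is the bookkeeping in Step 2: the vertices $Q$ with $P\rightarrow Q$ for $P\in L_i$ must be exactly $A_i\cup B_i$, each counted with the correct multiplicity. The exchange of summation handles this automatically, since $A_i\cup B_i$ is defined precisely as the set of such $Q$. The rest is routine.
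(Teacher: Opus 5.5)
Your proof is correct and follows the paper's argument essentially step for step. You use $\varkappa_i(P)=\nu(P)$ on $L_i$, the update $\varkappa_i(Q)=\varkappa_{i-1}(Q)-\sum_{P\in L_i,\,P\rightarrow Q}\nu(P)$ on $A_i\cup B_i$, and the recursion $\delta(P)=\nu_0(P)-\nu(P)+\sum_{P\rightarrow Q}\delta(Q)$, so the double sums cancel and leave an exact identity for $V(i)$; the inequality then comes from $A_i\cup B_i\subset L_{i-1}^{\prime}$ and the nonnegativity of $\delta$ and $\varkappa_{i-1}$, which you state explicitly where the paper leaves it implicit.
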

	
	\begin{proof}
		By the definition of $\delta$ for every $P\in E$ we have: $\delta
		(P)=\ord _{\nu_{0}}(P)-\ord _{\nu}(P)=\sum
		_{P\rightarrow Q}(\ord _{\nu_{0}}(Q)-\ord _{\nu
		}(Q))+\nu_{0}(P)-\nu(P)=\sum_{P\rightarrow Q}\delta(Q)+\nu_{0}(P)-\nu(P)$, and
		for $P\in L_{i}$, we have $\varkappa_{i}(P)=\nu(P),$ $i=0,\ldots,k.$ In turn, for every
		$Q\in L_{i}^{\prime}\setminus L_{i}=A_{i}\cup B_{i}$
		\[
		\varkappa_{i}(Q)=\varkappa_{i-1}(Q)-\sum_{P\rightarrow Q,P\in L_{i}}\nu(P).
		\]
		Since $L_{i}^{\prime}=L_{i}\cup A_{i}\cup B_{i}$ we may compute $V(i)$
		\begin{align*}
			V(i)  &  =\sum_{P\in L_{i}^{\prime}}\delta(P)\varkappa_{i}(P)=\sum_{Q\in
				A_{i}\cup B_{i}}\delta(Q)\varkappa_{i}(Q)+\sum_{P\in L_{i}}\delta
			(P)\varkappa_{i}(P)\\
			&  =\sum_{Q\in A_{i}\cup B_{i}}\delta(Q)(\varkappa_{i-1}(Q)-\sum_{P\rightarrow
				Q,P\in L_{i}}\nu(P))\\
			&  +\sum_{P\in L_{i}}(\sum_{P\rightarrow Q}\delta(Q)+\nu_{0}(P)-\nu
			(P))\nu(P)\\
			&  =\sum_{Q\in A_{i}\cup B_{i}}\delta(Q)\varkappa_{i-1}(Q)-\sum_{Q\in
				A_{i}\cup B_{i}}\delta(Q)\sum_{P\rightarrow Q,P\in L_{i}}\nu(P)\\
			&  +\sum_{P\in L_{i}}\sum_{P\rightarrow Q}\delta(Q)\nu(P)+\sum_{P\in L_{i}%
			}(\nu_{0}(P)-\nu(P))\nu(P)\\
			&  =\sum_{Q\in A_{i}\cup B_{i}}\delta(Q)\varkappa_{i-1}(Q)+\sum_{P\in L_{i}%
			}(\nu_{0}(P)-\nu(P))\nu(P).
		\end{align*}
		Since $A_{i}\cup B_{i}\subset L_{i-1}^{\prime}$, we obtain%
		\begin{align*}
			V(i)  &  \leq\sum_{Q\in L_{i-1}^{\prime}}\delta(Q)\varkappa_{i-1}%
			(Q)+\sum_{P\in L_{i}}(\nu_{0}(P)-\nu(P))\nu(P)\\
			&  =V(i-1)+\sum_{P\in L_{i}}(\nu_{0}(P)-\nu(P))\nu(P).
		\end{align*}
	\end{proof}
	
	Recursive formula (\ref{rec}) and the obvious inequality $V(k)\geq0$ give%
	
	\[
	0\leq V(0)+\sum_{P\neq R}(\nu_{0}(P)-\nu(P))\nu(P).
	\]

	Since $V(0)=\delta(R)\varkappa_{0}(R)=(\nu_{0}(R)-\nu(R))\nu(R)$ we obtain
	from the above inequality%
	
	\begin{equation}
		\sum_{P\neq R}\nu(P)^{2}\leq\sum_{P\neq R}\nu_{0}(P)\nu(P)+\nu_{0}%
		(R)\nu(R)-\nu(R)^{2} \label{ine}%
	\end{equation}

	Using the last inequality we get some inequality for the Milnor number of
	$E.$ First, we rewrite the formula for the Milnor number%
	\begin{align*}
		\mu(E)  &  =\sum_{P\in E}\nu(P)(\nu(P)-1)+1-r_{E}\\
		&  =\sum_{P\in E}\nu(P)(\nu(P)-1)+1-\sum_{P\in E}(\nu(P)-\sum_{Q\rightarrow
			P}\nu(Q))\\
		&  =\sum_{P\in E}\nu(P)^{2}-2\sum_{P\in E}\nu(P)+\sum_{P\in E}\sum
		_{Q\rightarrow P}\nu(Q)+1.
	\end{align*}

	Using (\ref{ine}) and separating free and satellite vertices we get
	\begin{equation}
		\mu(E)\leq\nu(R)(\nu_{0}(R)-2)+\sum_{P\text{-free}}\nu(P)(\nu_{0}%
		(P)-1)+\sum_{P\text{-satellite}}\nu(P)\nu_{0}(P)+1. \label{nier}%
	\end{equation}
	Notice that we have obtained this inequality without assuming that $f_{0}$ is a
	homogeneous singularity. It holds for any pair $(E_{0},\nu_{0})\rightarrow
	(E,\nu).$ If we assume $f_{0}$ is a homogeneous singularity we get from this
	inequality the first part of the theorem, which we formulate as a separate theorem.
	
	\begin{theorem}
		Let $f_{0}$ be a homogeneous singularity of order $n\geq3$ and
		$f_{s}=f_{0}+sg,$ where $g$ is holomorphic and $2\leq\ord g=j<n,$ be any
		linear deformation of $f_{0}$ of order $j.$ Then for small $s\neq0$%
		\[
		\mu(f_{s})\leq j(n-2)+1.
		\]
		
	\end{theorem}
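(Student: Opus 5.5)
The plan is to specialize inequality (\ref{nier}) to the case at hand. That inequality holds for any linear adjacency $(E_{0},\nu_{0})\rightarrow(E,\nu)$, so everything reduces to controlling the induced weight $\nu_0$ on $E$ and the root weight $\nu(R)$.

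First, let $(E,\nu)$ be the minimal Enriques diagram of a generic member $f_s$, $s\neq 0$. Its Milnor number $\mu(E,\nu)$ equals $\mu(f_s)$, since the Milnor number of a consistent diagram coincides with that of its completion. Since $\ord f_s=j$ for $s\neq0$ (because $\ord g=j<n=\ord f_0$), the root weight of $E$ is $\nu(R)=j$.

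Second, I determine $\nu_0$. The homogeneous $f_0$ is a union of $n$ distinct lines through the origin. Its minimal Enriques diagram $E_0$ is therefore the single root $R$ with weight $n$: the $n$ free points of weight $1$ in the first neighbourhood are deleted in the minimal diagram, and no satellite points occur. An equivalent diagram $E_0'$ as in Theorem \ref{CR} (see footnote \ref{foot}) consists of the root of weight $n$ together with free vertices of weight $1$ (and possibly vertices of weight $0$). More generally, any vertex of weight $1$ in $E_0'$ is an infinitely near point of one of the lines, and a smooth branch never passes through a satellite point. Transporting these weights to $E$ by the isomorphism of subdiagrams gives
\[
\nu_0(R)=n,\qquad \nu_0(P)\in\{0,1\}\ \text{for free }P\neq R,\qquad \nu_0(P)=0\ \text{for satellite }P.
\]

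Third, I substitute into (\ref{nier}). The root term is $\nu(R)(\nu_0(R)-2)=j(n-2)$. Each free term $\nu(P)(\nu_0(P)-1)$ is $\leq 0$, since $\nu(P)\geq0$ and $\nu_0(P)\leq1$. Each satellite term $\nu(P)\nu_0(P)$ vanishes. Hence
\[
\mu(f_s)=\mu(E,\nu)\leq j(n-2)+1 .
\]

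The only step needing care is the second one. I must justify that no diagram equivalent to the minimal diagram of $f_0$ can carry a weight larger than $1$ at a non-root vertex, or a positive weight at a satellite vertex. I would argue this directly from the equivalence relation: completion only adds free vertices of weight $1$ and removes vertices of weight $0$. So any diagram equivalent to $E_0$ has, besides the root, only vertices lying on the smooth branches (the lines), with weight at most $1$, and these vertices are all free.
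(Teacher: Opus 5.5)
Your proof is correct and follows the paper's argument: you specialize inequality (\ref{nier}) with $\nu(R)=j$ and $\nu_0(R)=n$, and use that adjacency from the one-vertex diagram of $f_0$ forces $\nu_0(P)\le 1$ at free vertices and $\nu_0(P)=0$ at satellite vertices. Your justification of these two weight bounds is more explicit than the paper's one-line assertion: positive-weight non-root vertices of a diagram equivalent to $E_0$ lie on the smooth branches, which pass only through free points, and the subdiagram isomorphism preserves proximity.
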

	
	\begin{proof}
		Let $(E_{0},\nu_{0})$ and $(E,\nu)$ be minimal Enriques diagrams of $f_{0}$
		and $f_{s}$ for small $s\neq0,$ respectively. By assumption
		$E_{0}=\{R_{E_0}\},$ $\nu_{0}(R_{E_0})=n,$ $\nu(R_{E_0})=j$ and $(E_{0},\nu_{0})\rightarrow
		(E,\nu).$ Hence from (\ref{nier}) we get
		\[
		\mu(E)\leq j(n-2)+\sum_{P\text{-free}}\nu(P)(\nu_{0}(P)-1)+\sum
		_{P\text{-satellite}}\nu(P)\nu_{0}(P)+1.
		\]
		Since $E_{0}$ consists of only one vertex, linear adjacency of
		$(E_{0},\nu_{0})$ to $(E,\nu)$ (recall we may extend $E_{0}$ by free vertices
		of the weight 1) implies that if $P\in E$ is a satellite vertex, then $\nu_{0}(P)=0$ and if
		$P\in E$ is a free vertex, then $\nu_{0}(P)\leq1$. Hence
		\[
		\sum_{P\text{-free}}\nu(P)(\nu_{0}(P)-1)\leq0,\sum_{P\text{-satellite}}%
		\nu(P)\nu_{0}(P)=0.
		\]
		Therefore,
		\[
		\mu(E)\leq j(n-2)+1.
		\]
	\end{proof}
	
	Now we turn to the second part of the theorem. We formulate it as the following theorem.
	
	\begin{theorem}\label{main2}
		Let $f_{0}$ be a homogeneous singularity of order $n\geq3$ and fix an
		integer $j,$ $2\leq j<n.$ Then there exists a linear deformation$\ f_{s}%
		=f_{0}+sg,$ $g$ holomorphic and order $j$, such that $\mu(f_{s})=j(n-2)+1$
		for small $s\neq0.$
	\end{theorem}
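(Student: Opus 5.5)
By the Proposition of \cite{doktorat} the linear jump depends only on the topological type of $f_{0}$. For the deformations considered here, with the order $j$ fixed, I will use the construction only in the form that one suitable $f_{0}$ of each type is enough. All homogeneous singularities of order $n$ (products of $n$ pairwise non-proportional linear forms) have the same topological type, namely the Enriques diagram with a single root of weight $n$. So by condition 2 $\Leftrightarrow$ 1 of Theorem \ref{CR} it suffices to exhibit one convenient $f_{0}$ and one $g$. I would choose coordinates so that one of the lines is $y=0$. Then $f_{0}=y\,h(x,y)$, where $h$ is homogeneous of degree $n-1$, reduced, and not divisible by $y$, so $h(1,0)=c\neq 0$. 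In particular $f_{0}$ contains the monomial $c\,x^{n-1}y$ and does not contain $x^{n}$. I take $g=y^{j}$, so that $\ord g=j$ and $\ord f_{s}=j$ for $s\neq 0$.

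The plan is to compute $\mu(f_{s})$ by Kouchnirenko's theorem. Every monomial of $f_{s}=y h+s y^{j}$ other than $y^{j}$ has the form $x^{a}y^{b}$ with $a+b=n$ and $b\ge 1$. Hence the Newton polygon of $f_{s}$ (for $s\neq0$) has the vertices $(0,j)$ and $(n-1,1)$, and it is not convenient on the $x$-axis. To handle this, I replace $f_{s}$ by $F_{s}=f_{s}+x^{N}$ with $N$ large. Once $f_{s}$ is shown to have an isolated singularity, finite determinacy gives $\mu(F_{s})=\mu(f_{s})$ for $N\gg0$.

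Next I check the monomials on the polygon. I need that all monomials $x^{a}y^{b}$ with $a+b=n$, $b\ge1$ lie on or above the segment from $(0,j)$ to $(n-1,1)$. The inequality is $a/(n-1)+(b-1)/(j-1)\ge 1$, and it follows from $a+b=n$ and $j<n$. The polygon of $F_{s}$ then has the two edges $[(0,j),(n-1,1)]$ and $[(n-1,1),(N,0)]$. The region under it has area $S=\tfrac{(n-1)(j+1)}{2}+\tfrac{N-n+1}{2}$. Kouchnirenko's formula $\nu=2S-N-j+1$ then gives
\[
\nu=(n-1)(j+1)+N-n+1-N-j+1=j(n-2)+1 ,
\]
which is exactly the bound attained in the previous theorem.

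The remaining step, which I expect to be the main obstacle, is Newton non-degeneracy of $F_{s}$ for $s\neq 0$, together with isolatedness of the singularity of $f_{s}$. On the first edge the principal part is $s y^{j}+c x^{n-1}y$ plus possibly further monomials of $yh$ lying on that edge. A lattice point of the edge satisfies $a(j-1)\equiv 0 \pmod{n-1}$, so for general $(n,j)$ only the endpoints occur and the part is a binomial. A binomial has no singular points in $(\mathbb{C}^{*})^{2}$, since $\partial_{x}$ of it equals $c(n-1)x^{n-2}y\neq0$ there. If interior lattice points do occur, I would instead use the freedom in choosing the other $n-1$ lines (i.e.\ $h$) to make the coefficients of $h$ at those monomials vanish, or be generic. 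Alternatively, I would argue directly that $y^{j-1}s+h(x,y)$ is reduced and coprime to $y$, hence $f_{s}$ is reduced. The second edge has the binomial $c x^{n-1}y+x^{N}$ and is handled the same way. With non-degeneracy established, $\mu(f_{s})=j(n-2)+1$ for all small $s\neq0$, which proves Theorem \ref{main2}. Combined with the previous theorem, this gives the jump $(n-1)^{2}-j(n-2)-1=(n-j)(n-2)$.
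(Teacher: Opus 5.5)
Your proof is correct, and it follows a genuinely different, fully explicit route. Two points should be tightened; both are easy.

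\textbf{Non-degeneracy.} The worry about the edge $[(0,j),(n-1,1)]$ is vacuous. Put $m=n-1-a$. Your inequality then reads $1+m\bigl(\tfrac{1}{j-1}-\tfrac{1}{n-1}\bigr)\ge 1$, with equality only for $m=0$, since $j<n$. So $x^{n-1}y$ is the only monomial of $yh$ on that edge, whatever lattice points the edge carries. The principal part is therefore exactly the binomial $sy^{j}+cx^{n-1}y$, and no choice of $h$ is needed.

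\textbf{Isolatedness.} Write $f_s=y\phi$ with $\phi=h+sy^{j-1}$. Then $y\nmid\phi$, because $\phi(x,0)=cx^{n-1}$. Moreover, $\phi$ is semi-quasihomogeneous for the weights $(j-1,n-1)$, with principal part $sy^{j-1}+cx^{n-1}$. Indeed, each other monomial $x^{a}y^{n-1-a}$, $a<n-1$, has weight exceeding $(n-1)(j-1)$ by $(n-j)(n-1-a)>0$. Hence $\phi$ is reduced, and so is $f_s$. This also gives a check independent of Kouchnirenko: $\mu(f_s)=\mu(\phi)+2(y\cdot\phi)_0-1=(j-2)(n-2)+2(n-1)-1=j(n-2)+1$.

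\textbf{The reduction.} The appeal to topological invariance and to Theorem \ref{CR} is unnecessary. A linear coordinate change turning a linear factor $\ell$ of the given $f_0$ into $y$ shows that $g=\ell^{j}$ works for every homogeneous $f_0$ directly.

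\textbf{Comparison with the paper.} The paper never writes down $g$; it stays at the level of Enriques diagrams. It constructs inductively a bamboo $(E,\nu)$ with root weight $j$, governed by the six sequences $a_i,\dots,f_i$. It checks linear adjacency to the one-vertex diagram of weight $n$. It computes $\mu(E)=j(n-2)+1$ via the telescoping identities $V(i+1)=V(i)-s_i^2$ and $V(k)=0$, so that $\sum s_i^2=(n-j)(2j-n-1)$. It reduces $2j<n+1$ to $2j\ge n+1$ by prepending free vertices of weight $j$. Only then does it obtain a deformation, from the Alberich-Carrami\~{n}ana--Ro\'{e} theorem.

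Your route replaces all of this by one explicit deformation and a two-edge Newton polygon computation, with no case split. The price is relying on Kouchnirenko's theorem and finite determinacy (or the intersection formula above) instead of the diagram calculus. The paper's route buys uniformity with its upper-bound proof, since the same function $V(i)$ drives both halves. It also gives a purely combinatorial construction, which the authors intend to extend to other classes and to the realizability of smaller Milnor numbers.

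Incidentally, one blow-up of your $f_s$ gives the strict transform $y_1\bigl(x^{n-j}u+sy_1^{j-1}\bigr)$, with $u$ a unit, of order $1+\min(n-j,j-1)$. This is the paper's weight $n-j+1$ at $S_1$ when $2j\ge n+1$, and the weight $j$ of its free prefix when $2j<n+1$. So your $g$ appears to be an explicit realization of the paper's diagrams.
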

	
	\begin{proof}
		Let $(E_{0},\nu_{0})$ be the minimal Enriques diagram of $f_{0}.$ It consists
		of only one point $E_{0}=\{R_{0}\}$ and $\nu_{0}(R_{0})=n.$ By Theorem
		\ref{CR} it suffices to construct an Enriques diagram $(E,\nu)$ such
		that $(E_{0},\nu_{0})\rightarrow(E,\nu),$ $\nu(R)=j$ where $R$ is the root of
		$E,$ and $\mu(E)=j(n-2)+1.$ We inductively construct a finite sequence of
		Enriques diagrams $(E_{i},\nu_{i}),$ $i=1,\ldots,k$ for some
		$k\geq1$ which the last element $(E_{k},\nu_{k})$ satisfies the required
		conditions: $\nu_{k}(R_{E_k})=j,$ $(E_{0},\nu_{0})\rightarrow(E_{k},\nu_{k})$
		and $\mu(E_{k})=j(n-2)+1.$ Algorithm for $(E_{i},\nu_{i})$ will be divided
		into two cases: the first one when $2j\geq n+1$ and the second one when
		$2j<n+1.$ The latter case will be reduced to the first one.
		
		1. Algorithm in the case $2j\geq n+1.$ In this case we will construct
		$(E_{i},\nu_{i})$, $i\geq 1$, which are minimal and consistent, where each $E_{i}$ is a bamboo which
		all vertices, except the first and the second ones are satellite, $E_{i+1}$ is an
		extension of $E_{i}$ by one vertex and $(E_{0},\nu
		_{0})\rightarrow(E_{i},\nu_{i})$.
		
		\begin{figure}[h]
			\begin{center}
				\begin{subfigure}{10em}
					\begin{tikzpicture}[scale=1]
						\node (p0) at (0.5,1) {$\bullet$};
						\node (p0) at (0.67,1.13) {$^j$};
						\node (p0) at (0.8,0.88) {$_{S_0}$};						
						\draw (0.5,1) circle (0.12);
						\node (p)  at (1.8,1) {$\bullet_{S_1}^{n-j+1}$};
						\node (w) at (1,-1.7) {};
						\draw[in=160,out=30] (0.5,1) to (1.4,1);			
					\end{tikzpicture}
					\caption{}
				\end{subfigure}
				\begin{subfigure}{10em}
					\begin{tikzpicture}[scale=1]
						\node (r) at (0.7,1) {$\bullet_{S_l}^{s_{l-1}}$};				
						\node (p0) at (1.8,0.95) {$\bullet_{S_{l+1}}^{s_l}$};
						\node (p)  at (1.81,0) {$\bullet_{S_{i-1}}^{s_{i-2}}$};
						\node (u)  at (1.8,-0.8) {$\bullet_{S_i}^{s_{i-1}}$};
						\node (w) at (1.81,-1.6) {$\bullet_{S_{i+1}}^{s_i}$};
						\draw (0.49,1) to (1.52,1);
						\draw[densely dashed] (-0.1,1) to (0.49,1);
						\draw[densely dashed] (1.52,1) to (1.52,0);
						\draw (1.52,0) to (1.52,-0.8);
						\draw (1.52,-0.8) to (1.52,-1.6);
						
					\end{tikzpicture}
					\caption{}
				\end{subfigure}
				\begin{subfigure}{10em}
					\begin{tikzpicture}[scale=1]
						\node (r) at (0.7,1) {$\bullet_{S_l}^{s_{l-1}}$};				
						\node (p0) at (1.81,0.95) {$\bullet_{S_{l+1}}^{s_l}$};
						\node (p)  at (1.81,0) {$\bullet_{S_{i-1}}^{s_{i-2}}$};
						\node (u)  at (1.8,-0.8) {$\bullet_{S_i}^{s_{i-1}}$};
						\node (w) at (2.8,-0.85) {$\bullet_{S_{i+1}}^{s_i}$};
						\node (z) at (2,-1.8) {};
						\draw (0.49,1) to (1.52,1);
						\draw[densely dashed] (-0.1,1) to (0.49,1);
						\draw[densely dashed] (1.52,1) to (1.52,0);
						\draw (1.52,0) to (1.52,-0.8);
						\draw (1.52,-0.8) to (2.5,-0.8);
						
					\end{tikzpicture}
					\caption{}
				\end{subfigure}
				
				\caption{The diagrams: (a) $(E_1,\nu_1)$, (b) $(E_{i+1},\nu_{i+1})$ in Case 1, (c) $(E_{i+1},\nu_{i+1})$ in Case 2 \label{indu}}
				
			\end{center}
		\end{figure}
		
		\textbf{Initialization.} Let $E_{1}=\{S_{0},S_{1}\}$ ($S_0$ is the root), with $S_{1}\rightarrow
		S_{0}$, $\nu_{1}(S_{0})=j$, $\nu_{1}(S_{1})=n-j+1$ (see Figure \ref{indu}(a)). Then $(E_{1},\nu_{1})$
		satisfies all the above conditions.
		
		\textbf{Inductive step.} Assume that we have constructed a minimal and
		consistent Enriques diagram $(E_{i},\nu_{i})$, $i\geq 1$, where $E_{i}=\{S_{0},\ldots,S_{i}\}$,
		$E_{i}$ is a bamboo i.e., $S_{i}\rightarrow S_{i-1}\rightarrow\ldots\rightarrow
		S_{1}\rightarrow S_{0}$, vertices $S_{2},\ldots,S_{i}$ are satellite and
		$(E_{0},\nu_{0})\rightarrow(E_{i},\nu_{i})$. We may assume $S_{i}\rightarrow
		S_{i-1},S_{l}$ (for $i=1$ only $S_{1}\rightarrow S_{0}$) for some $l<i-1.$
		Define the following values, which measure the differences between
		the corresponding weights and orders at the vertices involved in
		the inductive construction:
		\begin{align*}
			a_{i}  &  =\ord _{\nu_{0}}(S_{l})-\ord _{\nu_{i}%
			}(S_{l})\geq0,\\
			b_{i}  &  =\ord _{\nu_{0}}(S_{i-1})-\ord _{\nu
				_{i}}(S_{i-1})\geq0,\\
			c_{i}  &  =\ord _{\nu_{0}}(S_{i})-\ord _{\nu_{i}%
			}(S_{i})\geq0,\\
			d_{i}  &  =\nu_{i}(S_{l})-\sum_{P\rightarrow S_{l}}\nu_{i}(P)\geq0,\\
			e_{i}  &  =\nu_{i}(S_{i-1})-\nu_{i}(S_{i})\geq0,\\
			f_{i}  &  =\nu_{i}(S_{i})>0.
		\end{align*}

		We define $(E_{i+1},\nu_{i+1})$ as an extension of $(E_{i},\nu_{i})$ by
		one vertex in cases depending on values of the above sequences:

		\textbf{Case 1.} $s_{i}:=\min(a_{i}+c_{i},d_{i},f_{i})>0$. We define
		$E_{i+1}=\{S_{0},\ldots,S_{i},S_{i+1}\}.$ The proximity relations and the weights $\nu_{i+1}(S_{j})$ for $S_{0},\ldots,S_{i}$ are the same as in
		$(E_{i},\nu_{i}).$ For $S_{i+1}$ we put $S_{i+1}\rightarrow S_{i},S_{l},$ and
		$\nu_{i+1}(S_{i+1}):=s_{i}$ (see Figure \ref{indu}(b)). The Enriques diagram $(E_{i+1},\nu_{i+1})$ is consistent. Indeed, it suffices to check consistency of
		$(E_{i+1},\nu_{i+1})$ only at $S_{l}$ and $S_{i}.$
		\begin{align*}
			\nu_{i+1}(S_{l})-\sum\limits_{\substack{P\rightarrow S_{l},\\P\in E_{i+1}}%
			}\nu_{i+1}(P)  &  =\nu_{i}(S_{l})-\sum\limits_{\substack{P\rightarrow
					S_{l},\\P\in E_{i}}}\nu_{i}(P)-\nu_{i+1}(S_{i+1})\\
			&  =d_{i}-s_{i}\geq0,
		\end{align*}
		and
		\[
		\nu_{i+1}(S_{i})-\nu_{i+1}(S_{i+1})=\nu_{i}(S_{i})-\nu_{i+1}(S_{i+1}%
		)=f_{i}-s_{i}\geq0.
		\]
		In turn, to check $(E_0,\nu_0)\rightarrow(E_{i+1},\nu_{i+1})$ it suffices to check
		linear adjacency only at
		$S_{i+1}.$
		\begin{align*}
			\ord _{\nu_{0}}(S_{i+1})-\ord _{\nu_{i+1}}%
			(S_{i+1})  &  =\ord _{\nu_{0}}(S_{l})-\ord %
			_{\nu_{i}}(S_{l})+\ord _{\nu_{0}}(S_{i})\\
			-\ord _{\nu_{i}}(S_{i})-\nu_{i+1}(S_{i+1})  &  =a_{i}%
			+c_{i}-s_{i}\geq0.
		\end{align*}
		So, $(E_{i+1},\nu_{i+1})$ satisfies all the required conditions of the algorithm.
		
		\textbf{Case 2.} $\min(a_{i}+c_{i},d_{i},f_{i})=0$ and $s_{i}:=\min(b_{i}%
		+c_{i},e_{i},f_{i})>0$. Similarly as above we define $(E_{i+1},\nu_{i+1})$
		with one modification. In this case, the proximity relations for $S_{i+1}$ is
		$S_{i+1}\rightarrow S_{i},S_{i-1}$ (see Figure \ref{indu}(c)). The Enriques diagram $(E_{i+1},\nu_{i+1})$ is consistent. Indeed, it suffices to check consistency of
		$(E_{i+1},\nu_{i+1})$ only at $S_{i-1}$ and $S_{i}$
		\begin{align*}
			\nu_{i+1}(S_{i-1})-\sum\limits_{\substack{P\rightarrow S_{i-1},\\P\in E_{i+1}%
			}}\nu_{i+1}(P) &  =\\
			\nu_{i}(S_{i-1})-\sum\limits_{\substack{P\rightarrow S_{i-1},\\P\in E_{i}}%
			}\nu_{i}(P)-\nu_{i+1}(S_{i+1}) &  =e_{i}-s_{i}\geq0
		\end{align*}
		and
		\[
		\nu_{i+1}(S_{i})-\nu_{i+1}(S_{i+1})=\nu_{i}(S_{i})-\nu_{i+1}(S_{i+1}%
		)=f_{i}-s_{i}\geq0
		\]
		In turn, to check $(E_0,\nu_0)\rightarrow(E_{i+1},\nu_{i+1})$ it suffices to check linear adjacency $(E_{0},\nu_{0})\rightarrow(E_{i+1},\nu_{i+1})$ only at
		$S_{i+1}.$ We have
		\begin{align*}
			\ord _{\nu_{0}}(S_{i+1})-\ord _{\nu_{i+1}}%
			(S_{i+1}) &  =\ord _{\nu_{0}}(S_{i-1})-\ord %
			_{\nu_{i}}(S_{i-1})\\
			+\ord _{\nu_{0}}(S_{i})-\ord _{\nu_{i}}(S_{i}%
			)-\nu_{i+1}(S_{i+1}) &  =b_{i}+c_{i}-s_{i}\geq0.
		\end{align*}
		So, $(E_{i+1},\nu_{i+1})$ satisfies all the required conditions of the algorithm.
		
		\textbf{Case 3.} $\min(a_{i}+c_{i},d_{i},f_{i})=0$ and $\min(b_{i}+c_{i}%
		,e_{i},f_{i})=0$. This terminates the inductive construction. We define
		the required Enriques diagram by $(E,\nu):=(E_{i},\nu_{i})$ so that $k=i$. We always get this
		case because at each step we strictly decrease the sum $d_i+e_i+f_i$ (it is formula (\ref{end}) below, which is one of properties of the sequences $a_i,\ldots,f_i$). By construction
		we have that $(E,\nu)$ is minimal, consistent, $(E_{0},\nu_{0})\rightarrow
		(E,\nu)$ and $\nu(R)=j,$ where $R=S_0$ is the root of $E.$ So, it suffices to
		prove $\mu(E)=j(n-2)+1.$ To check this equality we have to come back to the
		function $V(i)$ defined in the first part of the proof. $V(i)$ was defined for
		any minimal Enriques diagram $(E,\nu)$ such that $(E_{0},\nu
		_{0})\rightarrow(E,\nu).$ Since in our case $E=\{S_{0},\ldots,S_{k}\}$ and it
		is a bamboo then the depth of the tree $E$ is $k.$ Hence by the definitions of
		$V(i)$ and the above sequences $a_{i},b_{i},c_{i},d_{i},e_{i},f_{i}$ we get
		\[
		V(i)=\left\{
		\begin{matrix}
			(n-j)j & \text{ for }i=0\\
			a_{1}d_{1}+c_{1}f_{1} & \text{ for }i=1\\
			a_{i}d_{i}+b_{i}e_{i}+c_{i}f_{i} & \text{ for }i=2,\ldots,k
		\end{matrix}
		\right.
		\]
		We will prove that
		\begin{align}
			V(i+1) &  =V(i)-s_{i}^{2},\text{ \ \ \ \ }i=2,\ldots,k-1\label{45}\\
			V(k) &  =0\label{46}%
		\end{align}

		First, we notice that six sequences $a_{i},b_{i},c_{i},d_{i},e_{i},f_{i}$
		can be defined recursively according to the following cases:
		
		1. Initial values are as follows: $a_{1}=b_{1}=n-j>0$, $c_{1}=0$, $d_{1}%
		=e_{1}=2j-n-1>0$, $f_{1}=n+1-j>0.$
		
		2. In \textbf{Case 1} i.e., $s_{i}:=\min(a_{i}+c_{i},d_{i},f_{i})>0$ we get from
		definitions $a_{i+1}=a_{i}$, $b_{i+1}=c_{i}$, $c_{i+1}=a_{i}+c_{i}-s_{i}$,
		$d_{i+1}=d_{i}-s_{i}$, $e_{i+1}=f_{i}-s_{i}$, $f_{i+1}=s_{i},$
		
		3. In \textbf{Case 2} i.e., $\min(a_{i}+c_{i},d_{i},f_{i})=0$ and $s_{i}:=\min
		(b_{i}+c_{i},e_{i},f_{i})>0$ we get from definitions $a_{i+1}=b_{i}$,
		$b_{i+1}=c_{i}$, $c_{i+1}=b_{i}+c_{i}-s_{i}$, $d_{i+1}=e_{i}-s_{i}$,
		$e_{i+1}=f_{i}-s_{i}$, $f_{i+1}=s_{i},$
		
		These recurrences imply, by induction, the following properties of these sequences:
		
		1. In \textbf{Case 1}
		\begin{equation}
			b_{i}+c_{i}=0,a_{i}\leq f_{i}\text{ \ or \ }a_{i}+e_{i}=0,c_{i}\leq
			d_{i}, i=1,\ldots,k.\label{47}%
		\end{equation}

		2. In \textbf{Case 2}
		\begin{equation}
			b_{i}+d_{i}=0,c_{i}\leq e_{i}\text{ \ or \ }a_{i}+c_{i}=0,b_{i}\leq
			f_{i}, i=1,\ldots,k.\label{48}%
		\end{equation}
		
		3. 	\begin{equation}
			1\leq d_{i+1}+e_{i+1}+f_{i+1}<d_i+e_i+f_i, i=1,\ldots,k-1.\label{end}%
		\end{equation}
		
		The above properties imply (\ref{46}). In fact, for $i=k$ we are in Case 3
		i.e., $\min(a_{k}+c_{k},d_{k},f_{k})=0$ and $\min(b_{k}+c_{k},e_{k},f_{k})=0.$
		Since we have always $f_{i}>0$ then $\min(a_{k}+c_{k},d_{k})=0$ and
		$\min(b_{k}+c_{k},e_{k})=0.$ Hence either $c_{k}=0$ or $d_{k}=e_{k}=0$. If
		$c_{k}=0$ then we have $\min(a_{k},d_{k})=0$ and $\min(b_{k},e_{k})=0.$ This
		implies $V(k)=a_{k}d_{k}+b_{k}e_{k}+c_{k}f_{k}=0.$ In turn, if $d_{k}=e_{k}=0$ then:
		
		$\bullet$ if previous case was \textbf{Case 1} and the first part of alternative (\ref{47})
		holds i.e., $b_{k}+c_{k}=0$, then we have $c_{k}=0$,
		
		$\bullet$ if previous case was \textbf{Case 1} and the second part of alternative (\ref{47})
		holds i.e., $a_{k}+e_{k}=0$ and $c_{k}\leq d_{k}=0$, then we have $c_{k}=0$,
		
		$\bullet$ if previous case was \textbf{Case 2} and the first part of alternative (\ref{48})
		holds i.e $b_{k}+d_{k}=0$ and $c_{k}\leq e_{k}=0$, then we have $c_{k}=0$,
		
		$\bullet$ if previous case was \textbf{Case 2} and the second part of alternative (\ref{48})
		holds i.e $a_{k}+c_{k}=0$, then we have $c_{k}=0$.
		
		\noindent In all subcases we have $c_{k}=0$. Hence, $V(k)=a_{k}d_{k}+b_{k}e_{k}%
		+c_{k}f_{k}=0$. 
		
		Now, we will show (\ref{45}).
		
		$\bullet$  In \textbf{Case 1}: $V(i+1)=a_{i+1}d_{i+1}+b_{i+1}e_{i+1}+c_{i+1}f_{i+1}%
		=a_{i}(d_{i}-s_{i})+c_{i}(f_{i}-s_{i})+(a_{i}+c_{i}-s_{i})s_{i}=a_{i}%
		d_{i}-a_{i}s_{i}+c_{i}f_{i}-c_{i}s_{i}+a_{i}s_{i}+c_{i}s_{i}-s_{i}^{2}%
		=a_{i}d_{i}+c_{i}f_{i}-s_{i}^{2}$. If $i=1$ then $V(2)=V(1)-s_{1}^{2}$. If
		$i>1$, since it is \textbf{Case 1} from (\ref{47}) we have $b_{i}+c_{i}=0$ or $a_{i}+e_{i}=0$. Then
		$b_{i}e_{i}=0$, so $V(i+1)=a_{i}d_{i}+c_{i}f_{i}+b_{i}e_{i}-s_{i}^{2}%
		=V(i)-s_{i}^{2}$.
		
		$\bullet$  In \textbf{Case 2}: $V(i+1)=a_{i+1}d_{i+1}+b_{i+1}e_{i+1}+c_{i+1}f_{i+1}%
		=b_{i}(e_{i}-s_{i})+c_{i}(f_{i}-s_{i})+(b_{i}+c_{i}-s_{i})s_{i}=b_{i}%
		e_{i}-b_{i}s_{i}+c_{i}f_{i}-c_{i}s_{i}+b_{i}s_{i}+c_{i}s_{i}-s_{i}^{2}%
		=b_{i}e_{i}+c_{i}f_{i}-s_{i}^{2}$. Since it is Case 2 (\ref{48}) we have $b_{i}+d_{i}=0$
		or $a_{i}+c_{i}=0$. Then $a_{i}d_{i}=0$, so $V(i+1)=b_{i}e_{i}+c_{i}f_{i}%
		+a_{i}d_{i}-s_{i}^{2}=V(i)-s_{i}^{2}$.
		
		\noindent This ends the proof of (\ref{45}).
		
		Thus from (\ref{45}) and (\ref{46})
		\[
		0=V(k)=V(k-1)-s_{k-1}^{2}=\ldots=V(1)-s_{1}^{2}-\ldots-s_{k-1}%
		^{2},
		\]
		so
		\[
		s_{1}^{2}+\ldots+s_{k-1}^{2}=V(1)=(n-j)(2j-n-1).
		\]

		By (\ref{branch}) the number of branches in $E$ is $r_{E}=j-s_{1}-\ldots-s_{k-1}$. Then
		\begin{align*}
			\mu(E)  &  =j(j-1)+(n-j+1)(n-j)\\
			&  +s_{1}(s_{1}-1)+\ldots+s_{k-1}(s_{k-1}-1)+1-r_{E}\\
			&  =j^{2}-j+n^{2}-nj-nj+j^{2}+n-j\\
			&  +s_{1}^{2}-s_{1}+\ldots+s_{k-1}^{2}-s_{k-1}+1-j+s_{1}%
			+\ldots+s_{k-1}\\
			&  =2j^{2}-3j+n^{2}-2nj+n+1+s_{1}^{2}+\ldots+s_{k-1}^{2}\\
			&  =2j^{2}-3j+n^{2}-2nj+n+1+(n-j)(2j-n-1)\\
			&  =2j^{2}-3j+n^{2}-2nj+n+1+3nj-n^{2}-n-2j^{2}+j\\
			&  =-2j+nj+1=j(n-2)+1.
		\end{align*}

		2. Algorithm in the case $2j<n+1$. We reduce this case to the first one taking as the initial part of construction the sequence (a bamboo) of free vertices of weight $j$ as many as possible i.e., until we get the assumptions of the first case are fulfilled. Precisely, let $t\in\mathbb{N}$ be such that $n+t>(t+1)j$ and $n+t+1\leq(t+2)j$. Define $n'=n+t-tj$ and $j'=j$. Then $2j'\geq n'+1$. So, taking the Enriques diagram $(E_0',\nu_0')$ of homogeneous singularity of order $n'$ and value $j'$, we can apply the first case. Let $(E',\nu')$, $E'=\{S_0,\ldots,S_k\}$, be the diagram obtained in the first case for $n',j'$. Then we define $(E,\nu)$ as: $E=\{P_1,\ldots,P_t\}\cup E'$, where $P_1,\ldots,P_t$ are new vertices, $P_1$ is the root and $P_{i+1}\to P_i$, $\nu(P_i)=j$, $S_0\to P_t$ and $\nu_{|E'}=\nu'$.
		
		The diagram $(E,\nu)$ is consistent since $P_1,\ldots,P_t$ are free, $\nu(P_{1}%
		)=\ldots=\nu(P_{t})=\nu(S_{0})=j$ and $(E^{\prime},\nu^{\prime})$ is consistent.
		
		Moreover, for $i=1,\ldots,t$ $\ord _{\nu_{0}}(P_{i})=n+i-1$ (see footnote (\textsuperscript{\ref{foot}}) on page \pageref{foot}),
		$\ord _{\nu}(P_{i})=ij$, $\ord _{\nu_{0}}%
		(P_{i})-\ord _{\nu}(P_{i})=n+i-1-ij=n-1-i(j-1)\geq
		n-1-t(j-1)\geq j-1>0$. It is easy to see that, for $S\in E'$, 
		$\ord _{\nu_{0}}(S)-\ord _{\nu}%
		(S)=\ord _{\nu_{0}'%
		}(S)-\ord _{\nu'}(S)\geq0$. Thus $(E_{0},\nu_{0}%
		)\to(E,\nu)$.
		
		Finally, $r_{E}=r_{E^{\prime}}$, and then
		\begin{align*}
			\mu(E)  &  =\mu(E^{\prime})+tj(j-1)\\
			&  =(n+t-tj-2)j+1+tj(j-1)\\
			&  =nj+tj-tj^{2}-2j+1+tj^{2}-tj\\
			&  =nj-2j+1=j(n-2)+1.
		\end{align*}
	\end{proof}
	\renewcommand{\qedsymbol}{}
\end{proof}

\section{Concluding remarks}
\noindent 1. The algorithmic construction given in the proof of Theorem \ref{main2}, which delivers a linear deformation with the maximal generic Milnor number, is not unique. For instance, for the same homogeneous singularity $(E_0,\nu_0)$ of order $n\geq 3$ and a fixed $j$, $2\leq j<n$, when $j$ is even i.e., $j=2k$ we may give another construction. Putting $l:=n-j$ we define $E=\{R,P_1^1,\ldots,P_1^k,\ldots,\allowbreak P_l^1,\ldots,P_l^k\}$, with weights $\nu(P^m_i)=2$, $\nu(R)=j$, and proximity relations $P^m_1\to R$ and $P_i^m\to P_{i-1}^m$. We easily check that $(E_0,\nu_0)\to(E,\nu)$ and $\mu(E,\nu)=j(n-2)+1$.

\bigskip
\noindent 2. By Theorem \ref{main} the greatest Milnor number of a linear deformation of a
singularity $f_{0}$ (homogeneous and order $n$) that we may attain is $(n-2)(n-1)+1$. It gives rise to a question if all less numbers $<(n-2)(n-1)+1$ are also generic Milnor numbers of linear deformations of $f_0$. It is an interesting question because in other classes of deformations it is not true. For example, there are some extraordinary gaps in the sequence of generic Milnor numbers in the class of non-degenerate deformations of $f_0$ \cite{BKW21}. In our considered case (linear deformations) it could not happen. All numbers $\mu<(n-2)(n-1)+1$ are generic Milnor numbers of linear deformations of $f_0$ (in the forthcoming article). It is a particular case of a more general problem whether for given $n,j$, $2\leq j<n$, all numbers $\mu<(n-2)j+1$ are generic Milnor numbers of linear deformations of $f_0$ of order $\leq j$. It is also solved in the forthcoming article. In fact, it is true with one exception: if $n$ is odd and $j=3$, the value $\mu=3n-6$ (provided it is $>1$) is not realizable.

\bigskip
\noindent 3. The main theorem concerns homogeneous singularities. Since the linear jump is a topological invariant, the results also hold for semi-homogeneous singularities i.e., singularities of the form $f_0=\tilde{f_0}+\tilde{\tilde{f_0}}$, where $\tilde{f_0}$ is a homogeneous isolated singularity of order $n$ and $\ord\tilde{\tilde{f_0}}>n$. The Enriques diagrams of $f_0$ and $\tilde{f_0}$ are the same.

\bigskip
\noindent 4. Some of the results (in particular, Lemma \ref{rec} and Formula (\ref{46})) hold for any pair of adjacent singularities $(E_0,\nu_0)\to(E,\nu)$, not necessarily with $(E_0,\nu_0)$ homogeneous. This suggests that the main result may be extended to a wider class of singularities (for instance quasi-homogeneous).

\bigskip
\noindent 5. Our original goal was to characterize all singularities $f_0$ for which $\lambda^{lin}(f_0)$ is equal to 1. While carrying out this task, we encountered the problem solved in this article. The mentioned characterization will be the subject of a future article.

\bigskip
\noindent 6. The result of this paper can be also applied to the following interesting problem of singularities: for a given $f_0$ characterize all linear $\delta$-constant deformations of $f_0$. In such deformations, according to the Milnor formula $\mu=2\delta-r+1$, the jump (downward) of the Milnor number must be equal to the jump (upward) of the number of branches. This will the subject of a future article.
	
\bibliographystyle{alpha}
\bibliography{biblio}	

@article{GZ93,
    author  = "Sabir Gusein-Zade",
    title   = "On singularities from which an {$A_1$} can be split off.",
    year    = "1993",
    journal = "Funct. Anal. Appl.",    
    volume  = "27",
    number  = "1",
    pages   = "57--60"
}

@book{GLS,
    author    = "Gert-Martin Greuel and Christoph Lossen and Eugenii Shustin",
    title     = "Introduction to {S}ingularities and {D}eformations",
    year      = "2006",
    publisher = "Springer Verlag",
	address="Berlin"
}

@book{CA00,
    author    = "Eduardo Casas-Alvero",
    title     = "Singularities of {P}lane {C}urve",
    year      = "2000",
    publisher = "Cambridge University Press",
	address="Cambridge "
}

@article{ACR05,
    author  = "Maria  Alberich-Carramiñana and Joaquim Roé",
    title   = "Enriques diagrams and adjacency of planar curve singularities",
    year    = "2005",
    journal = "Can. J. Math.",
    volume  = "57",
    number  = "1",
    pages   = "3--16"
}

@article{BKW21,
    author  = "Szymon Brzostowski  and Tadeusz Krasi\'nski and Justyna Walewska",
    title   = "Milnor numbers in deformations of homogeneous singularities",
    year    = "2021",
    journal = "Bull. Sci. Math.",
    volume  = "168",
    number   = "102973"
}

@article{Wal13,
    author  = "Justyna Walewska",
    title   = "The jump of {M}ilnor numbers in families of non-degenerate and non-convenient singularities",
    year    = "2013",
    journal = "Analytic and Algebraic Geometry, University of Łódź Press",
    pages   = "141--153"
}

@article{Bod07,
    author  = "Arnaud Bodin",
    title   = "Jump of {M}ilnor numbers",
    year    = "2007",
    journal = "Bull. Braz. Math. Soc. (N.S.)",
    volume  = "38",
    number  = "3",
    pages   = "389--396"
}

@article{Zak17,
    author  = "Aleksandra Zakrzewska",
    title   = "The jump of {M}ilnor number for linear deformations of homogeneous singularities",
    year    = "2017",
    journal = "Bull. Soc. Sci. Lett. Łódź, Sér. Rech. Déform.",
    volume  = "67",
	number = "3/2017",
    pages   = "77--88"
}

@article {BK14,
      author = "Szymon Brzostowski and Tadeusz Krasi\'nski",
      title = "The jump of the {M}ilnor number in the {$X_9$} singularity class",
      journal = "Cent. Eur. J. Math.",
      year = "2014",
      volume = "12",
      number = "3",
      pages="429--435",
}

@article {KW19,
      author = "Tadeusz Krasi\'nski and Justyna Walewska",
      title = "Non-degenerate jumps of the {M}ilnor numbers of quasihomogeneous singularities",
      journal = "Ann. Pol. Math.",
      year = "2019",
      volume = "123",
      pages="369--386",
}

@phdthesis {doktorat,
      author = "Aleksandra Zakrzewska",
      title = "The jump of {M}ilnor number for linear deformations of plane curve singularities",
		school  = "University of Lodz",
	  year = "2019",
      note = "http://hdl.handle.net/11089/31100 (PhD thesis in Polish)"
}

@article{Zak25,
      title={The jump of the {M}ilnor number of quasihomogeneous singularities for linear deformations}, 
      author={Aleksandra Zakrzewska},
      year={2025},
      journal="J. Singul.",
	  volume="28",
	  pages="23--38"
}
\begin{flushright}\footnotesize
	\textsc{Tadeusz Krasiński\\
		Aleksandra Zakrzewska\\
		Faculty of Mathematics and Computer Science\\ University of Lodz\\ul. Banacha 22\\90-238 Lodz, Poland\\aleksandra.zakrzewska@wmii.uni.lodz.pl}
\end{flushright}
\end{document}